\documentclass[10pt]{article}

\usepackage{amsmath,amssymb,amsfonts,amsthm,amscd,eucal,amssymb}
\usepackage[english]{babel}

\hsize=126mm
\vsize=180mm
\parindent=5mm

\newtheorem{thm}{Theorem}[section]

\newtheorem{lem}[thm]{Lemma}

\newtheorem{hypothesis}[thm]{Hypothesis}

\newtheorem{prop}[thm]{Proposition}

\theoremstyle{definition}
\newtheorem{defn}[thm]{Definition}

\newtheorem{rem}[thm]{Remark}

\def\bE{\mathbb{E}}

\def\cF{\mathcal{F}}

\def\PP{\mathbb{P}}
\def\bA{\mathbf{A}}
\def\la{\langle}
\def\ra{\rangle}
\def\ldueo{L^2(\mathcal{O})}
\def\hunozero{H^1_0(\mathcal{O})}
\def\dd{{\rm d}}

\def\bW{\mathbf{W}}
\def\bJ{\mathbb{J}}

\def\cL{\mathcal{L}}

\def\tF{\tilde{\mathcal{F}}}

\def\tX{\tilde{X}}
\def\btW{\tilde{\mathbf{W}}}
\def\tY{\tilde{Y}}
\def\tW{\tilde{W}}
\def\tZ{\tilde{Z}}
\def\cU{\mathcal{U}}
\def\R{\mathbb{R}}
\def\bN{\mathbb{N}}
\def\bU{\mathbb{U}}

\def\cH{\mathcal{H}}
\def\cF{\mathcal{F}}
\def\bF{\mathbf{F}}
\def\cQ{\mathbf{Q}}
\def\bQ{\mathbf{Q}}
\def\cO{\mathcal{O}}
\def\cX{\mathcal{X}}

\def\ep{\varepsilon}
\begin{document}

  \title{Optimal control for stochastic heat equation with memory}
  \def\lhead{F.\ CCONFORTOLA, E.\ MASTROGIACOMO} 
  \def\rhead{Optimal control for stochastic Volterra equations} 

\author{
  Fulvia CONFORTOLA\thanks{Email address: fulvia.confortola@polimi.it}
  \\
  Elisa MASTROGIACOMO\thanks{Email address: elisa.mastrogiacomo@polimi.it}
}

\maketitle

  \begin{abstract}
    In this paper, we investigate the existence and uniqueness of solutions for a class
    of evolutionary integral equations perturbed by a noise arising in the theory of heat conduction. As a motivation of our results, we study an optimal
control problem when the control enters the system together with the noise.
  \end{abstract}

\begin{center}\begin{minipage}{.8\textwidth}{%
\small {{\it Key words: \ }}  
    equations with memory, dynamical systems, stochastic optimal control.

    {\par\leavevmode\hbox {\it 1991 MSC:\ }} 45D05{\unskip, }
    93E20{\unskip, } 60H30.  }
\end{minipage}\end{center}
\date{\null}
\section{Introduction}\label{sec:intr}

Our main goal in this paper is to analyse a class of stochastic integro-differential equation arising in 
the theory of heat conduction for materials with memory and to present an application to an optimal control problem where
the control enters the system together with the noise.
Needless to say that many physical phenomena are better described if one considers
in the equation of the model some terms which take into consideration the past hystory of the system. Further, it is sensible to assume that the modeld of certain phenomena
from the real world are more realistic if some kind of uncertainity, for instance,
some randomness or enviromental noise, is also considered in the formulation.

We wish to mention that applications to optimal control problems naturally arise in the 
study of heating processes, for example in modeling heating with radiation boundary condition, 
simplified superconductivity, control of stationary flows, glueing in polymeric materials (for a thorough introduction to these problems we refer to the standard monograph by Lions \cite{Lio71} or Fredi \cite{tro05}.


Here we are concerned with the following semilinear heat equation 
\begin{equation}\label{eq:Volt1}
   \begin{aligned}
     \partial_t v(t,x) =k_0\Delta v(t,x)+\int_{-\infty}^t k_1(t-s) \Delta v(s) \dd s +
      g(t,x,v(t,x)) 
   \end{aligned}
\end{equation}
in the bounded domain $\cO\subset \R^d$ with Dirichlet boundary condition
\begin{align}\label{eq:bc}
     v_{|\partial \cO}(t,x)=0, \quad t\in \R, x\in \partial \cO
\end{align}
and initial condition given by
\begin{align}\label{eq:ic}
     v(s,x)=v_0(s,x), \quad s\leq 0,x\in \cO.
\end{align}
Notice that $v_0(\cdot)$ represents the \emph{past history} of the system
and should satisfy suitable smoothness properties (as we will see later on).
Moreover, the function $k(t)=k_0+\int_0^t k_1(s) \dd s$ is called the 
\emph{convolution kernel} of the system and $k_1$ is assumed to be 
$3$-monotone (see Hypothesis \ref{hp:kernel} for the precise definition of this term).

We are interested in the analysis of the system \eqref{eq:Volt1} when
the function $g$: \begin{enumerate}
\item[$i.$] is given by a Lipschitz continuous term $f$ and an additive gaussian noise $W$ with covariance $Q$, i.e.
\begin{align*}
    g(t,x,v)= f(t,v(t,x)) + \sqrt{Q}\partial_t W(t);
\end{align*}
\item[$ii.$] depends on a further parameter $\gamma$ which introduces a control process in the system;
this means that
\begin{align*}
     g=g(t,x,v,\gamma)= f(t,v(t,x)) + \sqrt{Q}(r(t,v(t,x),\gamma(t,x))+\partial_t W(t)),
\end{align*}
where $r$ is a function with appropriate regularity.
\end{enumerate}
The main question arising around the first case (which we refer to as \emph{uncontrolled problem}) is to determine 
existence and uniqueness of the solution. This problem can be handled by reducing 
equation \eqref{eq:Volt1} to an abstract Cauchy problem on an appropriate product 
space, which contains the whole history of the solution.  Within this framework the system can be represented with the following evolutionary equation
\begin{equation*}
\begin{cases}
    \dd X(t) = \bA X(t) \dd t + \bF (t,X(t))\dd t + \sqrt{\bQ}\dd W(t) \\
    X(0)=X_0,
\end{cases}
\end{equation*}
where $\bA$ is the generator of a $C_0$-semigroup, $\bF$ is a Lipschitz continuous function 
$\bQ$ a linear operator and $\bW$ a vector defined in term of the Wiener process
$(W(t))_{t\geq 0}$.
 Similar approaches are widely used
in literature, see for example Miller \cite{miller/1974} and Dafermos \cite{daf70} for the deterministic
counterpart, Caraballo and Chueshov \cite{car-chu08,car-chu07} and the more recent work
\cite{BoDaTu} for stochastic models. Anyway, differently for them, we are able to treat 
  more general kernels $k$. 

We stress that our approach may has the advantage that it naturally links the solution of a
Volterra equation to a Markov process; this has the important development in view of the application of the
analytic machinery to Volterra equations to solve the optimal control
problems.
 
In the  case $g$ contains a control parameter, the natural problem is to
  determine a solution of the Volterra equation and a control process $\gamma$, within a set of admissible controls, in such a way that they minimize a cost functional. 
In particular in this paper we consider a cost of the form:
\begin{align*}
     \bJ(v_0,\gamma)= \bE \int_0^T \int_{\bar{\cO}} \ell(t,v(t,\xi),\gamma(t,\xi) \dd \xi \ \dd t 
+ \bE \int_{\bar\cO} \phi(v(T,\xi)) \dd \xi,
\end{align*}
where $\ell$ and $\phi$ are given real functions. 

In the same way as the uncontrolled problem, the model can be translated into an abstract setting. In particular, it can be rewritten in the form
\begin{equation*}
\begin{cases}
    \dd X(t) = \bA X(t) \dd t + \bF (t,X(t))\dd t + \sqrt{\bQ}( R(t,X(t),\gamma(t)) \dd t+ \dd W(t)) \\
    X(0)=X_0,
\end{cases}
\end{equation*}
where $\bA,\bF,\bQ,\bW, X_0$ are as above and $R$ is given in terms of the function $r$.
Notice here the special structure of the control term, which is clearly a restriction; however it arises from concrete models. 
 Due to the special structure of the control term we are able to perform the synthesis of the optimal control, by solving in the weak sense 
the closed loop equation. Thus, we can characterize optimal controls by a feedback law. 

The paper is organized as follows: in the next subsection we give the physical motivation
of our work; in Section \ref{sec:genass} we introduce the main assumptions on the coefficients of the problem while in Section \ref{sec:anal} we reformulate the uncontrolled problem into a semilinear abstract evolution equation and we study the properties of leading operator, while 
in Section \ref{sec:stoc-conv} we are concerned with the stochastic convolution of the
rewritten equation. To this end,
we study the so-called resolvent family (see Subsection \ref{ssec:res-fam}) and the scalar resolvent family (see Subsection \ref{ssec:screfa}) associated with our problem. In Section \ref{sec:exun} we prove the first main result of the paper: we determine existence and uniqueness of the solution of the uncontrolled Volterra equation \ref{eq:Volt1}. Finally, in Section \ref{sec:cont} we perform the standard synthesis of the optimal control.  

\subsection{Motivation}
    Let us briefly explain one possible physical meaning of our model. Let $\cO$ be a 
  $3$-dimensional \emph{homogeneous} and \emph{isotropic} rigid body (see
Pr\"uss \cite[p.125]{pruss} for more details on the physical terminology) which is represented by
 an open set $\cO \subset \R^d$ ($d=1,2,3$) with boundary $\partial \cO$ of class $C^1$.
Points in $\cO$ (i.e. material points) will be denoted by $x, y, \dots$.
Suppose that the body $\cO$ is subject to temperature changes.
We denote by
$v=v(t,x)$ the temperature at time $t \in \R^+$, $q(t,x)$ the heat flux vector field, $e(t,x)$ the temperatue and $f(t,x,v)$ the heat
supply (possibly depending on the solution itself).

%
%

We denote by
$v=v(t,x)$ the temperature at time $t \in \R^+$, $q(t,x)$ the heat flux vector field, $e(t,x)$ the temperatue and $f(t,x)$ the external heat
supply. Balance of energy then reads as:
\begin{align}\label{eq:en-bal}
     \partial_t e(t,x)= -{\rm div} q(t,x)+ f(t,x), \quad t\in \R,x\in \cO,
\end{align}
with the boundary conditions basically either prescribed temperature or prescribed
heat flux through the boundary. In particular, one (natural) choice is represented by
Dirichlet boundary conditions:
\begin{align*}
      v(t,x)_{|x\in \partial \cO}=0, \quad t\in \R.
\end{align*}
For the relationship between $e$ and $v$ we shall use the following linear
law (or, more formally, \emph{constitutive law}):
\begin{align*}
    e(t,x)=\int_0^\infty \dd m(r) v(t-r,x) \dd r + e_\infty, \qquad t\in \R^+, x\in \cO;
\end{align*}
where $e_\infty$ is a suitable positive phenomenological constant.
Analogously, for the constitutive law relating $q$ and $v$ we choose
\begin{align*}
    q(t,x)=-\int_0^\infty \dd k(r)\nabla v(t-r,x), \qquad t\in \R, x\in \cO,
\end{align*}
where $m,k\in BV_{loc}(\R^+)$ are scalar functions.

Rearranging equation \eqref{eq:en-bal}, we arrive at the following non autonomous heat-equation
with memory
\begin{align*}
    &\dd m *\partial_t v(t,x)= \dd k *\Delta v(t,x) + f(t,x), \qquad t>0, x\in \cO;
    \\
   &v(t,x)=0, \quad t\in \R_+, x\in \partial \cO,
\end{align*}
where $*$ denotes the symbol for the convolution product between two functions.
\begin{rem}
From the literature one can infer that $m$ is a {\emph creep function}, i.e. it is nonnegative,
nondecreasing and concave which is also bounded. The natural form of this kind of functions is given by
\begin{align*}
    m(t)=m_0 + m_\infty t + \int_0^t m_1(r)   \dd r,
\end{align*}
for $m_0\geq 0$, $m_\infty \geq 0$ (in our case, $m_\infty=0$) and $m_1\in L^1(\R_+)$.

From a physical point of view, $m_0$ corresponds to the \emph{istantaneous heat capacity}, i.e.
the ratio of the change in heat energy of a unit mass of a substance to the change in temperature of the substance. The function $m_1$ is called \emph{energy-temperature relaxation function} 
while $m(\infty)=m_0+\int_0^\infty m_1(s)\dd s$ is termed \emph{equilibrium heat capacity}.
In accordance with several works concerning with same type of problems (see, for example,
Cl\'ement and Nohel \cite{cle-noh79}, Nunziato\cite{Nunziato1971}, Monnieaux and Pr\"uss  \cite{mon-pru97},  Grasselli and Pata \cite{gra-pat01,gra-pat06}), in this paper we choose for semplicity $m(t)\equiv m_0=1$.
 
Concerning the function $k$, the literature is somewhat controversial. From Gurtin and Pipkin
\cite{gur-pip68} and Nunziato \cite{Nunziato1971} one can expext that $k$ is a bounded creep function as well, in particular
$k_\infty=0$, $k_1\in L^1(\R^+)$ and $k(\infty)=k_0+\int_0^\infty k_1(s) \dd s >0$.
The constant $k_0$ is termed \emph{istantaneous conductivity}, $k(\infty)$ is called
\emph{equilibrium conductivity} while $k_1$ is called \emph{heat conduction relaxation function}.
On the other hand, Cl\'ement and Nohel \cite{cle-noh79}, Cl\'ement and Pr\"uss \cite{cle-pru90} and Lunardi
\cite{lun90} and  write $k(t)=k_0-\int_0^t k_1(s)\dd s >0$ with $k_1$ positive and nonincreasing; in this case $k$ is $2$-monotone
(see Hypothesis \ref{hp:kernel} for the explanation of this term). Also Bonaccorsi and Da Prato and Tubaro \cite{BoDaTu} consider $k$ as above but they require $k_1$ completely monotone. In this theory the equilibrium conductivity $k(\infty)=k_0-\int_0^\infty k_1(s) \dd s$ is smaller then the istantaneous conductivity, in contrast with Nunziato. 

We stress that in the present paper, we assume that $k$ has the same form as in 
Gurtin and Pipkin \cite{gur-pip68} and Nunziato \cite{Nunziato1971}.
\end{rem}

\section{General assumptions}\label{sec:genass}
In equation \eqref{eq:Volt1}, we are given the kernel $k:\ \R \to \R$, the non linear term 
$f: \ \R \to \R$ and the stochastic perturbation $(W(t))_{t\geq 0}$. 

We assume the following. 

\begin{hypothesis}\label{hp:kernel}

The kernel $\left\{k(t): t\geq 0\right\}$ is a {\emph creep function}, that is,
     $k(t):=k_0+\int_0^t k_1(s) {\rm d}s$, where $k_0>0$ and the function $k_1$ is ${\bf 3}$-{\bf monotone}, that is, it 
 satisfies the following conditions:
   \begin{enumerate}
         \item[h1)] $k_1\in L^1(\R^+)\cap C^1(\R^+)$;
         \item[h2)] $k_1$ is positive and nonincreasing;
        \item[h3)]  $-k_1^\prime$ is nonincreasing and convex;
   \end{enumerate}
\end{hypothesis}

\begin{rem}
      We stress that the above assumption allows $k_1(t)$ to have a singularity at $t=0$, whose order is less than $1$, since $k(t)$ is a non-negative function in
$L^1(\R^+)$. For instance, we are able to consider a weakly singular kernel of the following type
  \begin{align*}
       k_1(t):=\frac{e^{-\delta t}}{t^\gamma}, \quad 0\leq \gamma <1.
  \end{align*}
\end{rem}

\begin{rem}
    Following the terminology introduced by Pr\"uss \cite[Definition 4.4 pag. 94]{pruss}, the function
$k$ belongs to the class of \emph{creep functions}. As stated in Section \ref{sec:intr}, the function $k$ has a physical meaning within the theory of materials with memory.
\end{rem}

Concerning the nonlinear part of the system we have:
\begin{hypothesis}\label{hp:f}
  The function $f:\ [0,T] \times \R \to \R$ satisfies the following conditions:
  \begin{enumerate} 
  \item   $f$ is continuous and differentiable on $\R$.
   \item  $f$ is Lipschitz continuous with respect to $x$,  uniformly on $t$, and has sublinear growth; this means that there exists a constant  $L > 0$ such that
  \begin{align*}
   &|f(t,x)| \leq L(1 + |x|)   \qquad t\geq 0, x\in \R \\
 &  |f(t,x) - f(t,y)| \leq L|x - y| \qquad t\geq 0, x,y\in \R.
   \end{align*}
  \end{enumerate}
\end{hypothesis}

The conditions on the stochastic perturbation are given in the following.
\begin{hypothesis}\label{hp:noise}
     \begin{enumerate}
        \item[]
     \item The process $(W(t))_{t\geq 0}$ is a cylindrical Wiener process defined on a complete
probability space with values in $L^2(\cO)$. In particular
   $W(t)$ is of the form
    \begin{align*}
       \la W(t),x\ra = \sum_{k=0}^\infty \la e_k,x \ra \beta_k(t), \qquad t\geq 0, x\in L^2(\cO)
    \end{align*}
     where $\left\{ \beta_k\right\}_{k\in\bN}$ is a sequence of real, standard, independent
     Brownian motions on $(\Omega, \cF, (\cF_t)_{t\geq 0}, \PP)$.
      \item $Q$ is a linear bounded operator, symmetric and positive. With no loss of generality,
  we shall assume in the sequel that $A$ and $Q$ diagonalizes on the same basis of $L^2(\cO)$
   (this is required only for semplicity);
    \item \label{it:AQ} If $\left\{\mu_j\right\}_{j\in \bN}$ and $\left\{\lambda_j\right\}_{j\in \bN}$ are respectively the eigenvalues of $A$ and $Q$ then we require
    \begin{align*}
          {\rm Tr}[(-\Delta)^{1-(1+\theta)/\delta}Q] = \sum_{j=1}^\infty \frac{\lambda_j}{\mu_j^{(1+\theta)/\delta-1}} <\infty,
   \end{align*}
    where $\delta$ is the quantity 
   \begin{align}\label{eq:delta}
          \delta:=1+\frac{2}{\pi}\sup\{ |{\rm arg}\, \hat{k}(\lambda)|: {\rm Re}\lambda >0\}
    \end{align}
   and $\theta$ is any real number in  $ (0,1)$ such that $1+\theta >\delta$.
    \end{enumerate}
\end{hypothesis}

\begin{rem}
\begin{enumerate}
\item[]
\item[$i.$]
   We notice that the quantity $\delta$ introduced in \eqref{eq:delta} depends only on the 
   behaviour of the Laplace transform of the kernel $k$. In Pruss and Monnieaux \cite{mon-pru97} it is proved that, for the class of kernels considered by us (i.e. for $3$-monotone kernels), 
   the Laplace transform $\hat{k}$ satisfies the following bound: 
\begin{align}\label{eq:sect} 
\sup\{|{\rm arg}\,\hat{k}(\lambda)|: {\rm Re} \lambda>0\}=\theta <\frac{\pi}{2}
\end{align}
  and, consequently, $\delta$ belongs to $(1,2)$.  
 Following the terminology in Pr\"uss (see \cite{pruss}) we say that 
  the kernel $k$ is $\theta$-sectorial.
  
  It can be proved that the sectoriality of the kernel 
   plays a central role in the study of the Volterra equation
\ref{eq:Volt1}. In particular, it
   allows to prove existence of the resolvent family corresponding with the problem, and consequently to investigate existence and uniqueness of the solution.
   For more details we refer to Section \ref{ssec:res-fam} and the monograph  \cite[Section 3]{pruss}.

   \item[$ii.$]
   As has been observed, $\delta \in (1,2)$; then condition \ref{it:AQ} in Hypothesis
    \ref{hp:noise} 
   implies that ${\rm Tr}[QA^{-\epsilon}]$ for any $\epsilon \in [0,2/\delta-1)$.  
   We stress that this condition is authomatically satisfied if $Q$ is of trace class. 
   \end{enumerate}
\end{rem}
\section{Statement and Reformulation of the uncontrolled equation}\label{sec:anal}

\subsection{The abstract setting}
In this section we are concerned with the following (uncontrolled) class of integral Volterra equations perturbed
by an additive Wiener noise
\begin{equation}\label{eq:uncont}
   \begin{aligned}
     &\partial_t v(t,x)= k_0 \Delta v(t,x)+\int_{0}^\infty k(s) \Delta v(t-s,x) \dd s 
       \\ & \qquad \qquad  +
      f(t,v(t,x)) + 
      \sqrt{Q} \partial_t W(t,x) \qquad \qquad t >0  \\
    & v(-t,x)=v_0(-t,x), \quad t\geq 0 \\
     & v(t,x)=0, \qquad t\geq 0, x\in \partial \cO.
\end{aligned}
\end{equation}
Our first purpose is to rewrite equation \eqref{eq:uncont} as an evolution equation
defined on a suitable Hilbert space. 

To this end we denote with $L^2(\cO)$ the space of square integrable, real valued functions defined
on $\cO$ with scalar product $\langle u, v\rangle = \int_{\cO} u(\xi)v(\xi) d\xi$, for any $u,v\in L^2(\cO)$.
Sobolev spaces $H^1(\cO)$ and $H^2(\cO)$ are the spaces of functions whose first (resp.
first and second) derivative are in $L^2(\cO)$. We set moreover $H^1_0( \cO)$ the subspace of
$H^1(\cO)$ of functions which vanish (a.e.) on the boundary $\partial \cO$.

We let $X = H^{-1}(\cO)$ the topological dual of $H^1_0( \cO)$.

We recall that the operator $\Delta$ with domain $H^1_0( \cO)$  is the generator of a $C_0$-semigroup of contractions; since $\Delta$ is self
adjoint, the semigroup is analytic: see for instance \cite[22, Theorem 1.5.7, Corollary 1.5.8]{vra03}.

In order to control the unbounded delay interval, we shall consider $L^2$ weighted
spaces. Let 
\begin{align}\label{eq:rho}
 \rho(t) =\int_t^{\infty} k_1(s) ds.
\end{align} Then we set $\cX = L^2_{\rho}(\mathbb{R}_+;H^1_0 (\cO))$ be the space of
functions $y : \mathbb{R}_{+} \rightarrow D(A) = H^1_0 (\cO)$ endowed with the inner product
\begin{equation*}
\langle y_1,y_2\rangle_{\cX}= \int_0^{\infty}\rho(s)\langle\nabla y_1(s), \nabla y_2(s)\rangle \dd s
\end{equation*}
and $\|y\|_{\cX}$ the corresponding norm. On this space, we introduce the delay operator
$K$ with domain $D(K) = \cX$ by setting
\begin{equation*}
K \eta=\int_0^{\infty} k_1(s) \Delta \eta(s) \dd s
\end{equation*}
Finally, we define the Hilbert space $\cH = L^2(\cO) \times L^2_{\rho}(\mathbb{R}_+;H^1_0 (\cO))$
 endowed with the energy
norm $\|x\|^2_{\cH}=\|v\|^2_{L^2(\cO)}+\| \eta\|^2_{ \cX}$, $x=\binom{v}{\eta}$.

Our aim is to reduce this problem to an abstract Cauchy problem on the product space
$\cH$ in such a way that the first component gives the evolution of the 
system while the second contains all the informations concerning the whole history of the solution. 
The state variable in the Hilbert space $\cH$ will be denoted by $X(t)$. Thus $(X(t))_{t\geq 0}$
is a process in $\cH$ and the initial condition is assumed to belong to $\cH$ and satisfies
suitable properties to be precised. 

We introduce the linear operator $\bA$ 
defined as:
\begin{align*}
\bA  \begin{pmatrix}
       v \\ \eta
       \end{pmatrix}  & = 
       \begin{pmatrix}
       \Delta & K \\
          0 & -\partial_s 
       \end{pmatrix} \begin{pmatrix}
       v \\ \eta
       \end{pmatrix} \\& = 
      \begin{pmatrix}
        k_0  \Delta v + \int_0^{\infty}
k_1(s)  \Delta \eta(s) \dd s\\
        - \partial_s \eta\end{pmatrix}
\end{align*}
with domain $D(\bA)\subset \cH$
\begin{align*}
      D(\bA):= \left\{ \begin{pmatrix} v \\ \eta(\cdot)\end{pmatrix} \in H^1_0(\cO)\times W^{1,2}_\rho(\R_+; \hunozero) \, : \, \eta(0)= v, \ \Delta v + K\eta \in \cX\right\}.
\end{align*}

In order to handle the contribution of temperature
values taken in the past, we introduce the new variable
$$\eta^t(s) = v(t-s), \qquad s \geq 0.$$

Moreover, we introduce the non linear operator $\bF: [0,T]\times \cH \to \cH$
\begin{align*}
      \bF \left( t, \begin{pmatrix}
      v \\
      \eta(\cdot)
       \end{pmatrix} \right):=  \begin{pmatrix}
      f(t,v) \\
      0
       \end{pmatrix} 
\end{align*}
where $f$ is the non linear term in Equation \ref{eq:uncont}. Finally we introduce the linear operator $\bQ$ and the stochastic perturbation $\bW$ on $\cH$ as
\begin{align*}
     \bQ := \begin{pmatrix}
      Q & 0 \\
      0 & 0
       \end{pmatrix}
  \qquad \qquad 
     \bW(t)=\begin{pmatrix}
          W(t) \\
          0
      \end{pmatrix}.
\end{align*}
With the above notation,  problem (\ref{eq:uncont}) can be rewritten in the form
\begin{equation}\label{eq:EDS}
 \begin{cases}
\dd X(t) =( \bA X(t) + \bF(X(t)))\dd t +\sqrt{\cQ}\,\dd \bW( t), \qquad t \in [0, T],\\
 X(0)=X_0 \in \cH,
\end{cases}
\end{equation}
where $X(t)$ stands for the pair $\binom{v(t)}{\eta^t(\cdot)}$ and $X_0:=\binom{\bar{v}}{\bar{\eta}(\cdot)}$ is the initial condition. 

In the following (see Section \ref{sec:prop-gen}) we will see that the dynamics of the system is described in terms
of the transition semigroup $e^{t\bA}$ generated by the linear operator $\bA$. As a consequence we will read the solution of the original Volterra equation in the first
component of $X$.

Before prooceding, let us recall the definition of mild solution for the stochastic Cauchy problem
 \eqref{eq:EDS}.
\begin{defn}
    Given an $\cF_t$-adapted cylindrical Wiener process on a probability
space $(\Omega,\cF,(\cF_t)_{t\geq 0},\PP)$, a process $(X(t))_{t\geq 0}$ is a mild solution
of \eqref{eq:EDS} if it belongs to $L^2(0,T;L^2(\Omega;\cH))$ and satisfies $\PP$-a.s. the following integral equation
\begin{align}\label{eq:mild-sol}
    X(t)=e^{t\bA}X_0+ \int_0^t e^{(t-s)\bA}\bF(X(s))\dd s + \int_0^t e^{(t-s)\bA}\sqrt{\bQ}\dd W(s), \qquad t\geq 0.
\end{align}
\end{defn}

Condition \ref{eq:mild-sol} implies that the integrals on the right-member are well defined. In particular, the second integral, which we shall refer to as {\it stochastic convolution}, is a mean- square continuous gaussian process with values in $\cH$. For the analysis of the stochastic 
convolution and its properties, we refer Section \ref{sec:stoc-conv}. 

\subsection{Generation properties}\label{sec:prop-gen}
In this section we are dealing with the generation properties of the leading (matrix) operator
and prove that, in our setting, the operator is quasi-$m$-dissipative (see inequality
\eqref{eq:quasi-diss} below) and that the range of $\mu-\bA$ is dense in
$\cH$ for some (and all) $\mu>0$. In this way we will able to apply the Lumer-Phillips theorem
to conclude that $\bA-\mu$, and hence $\bA$, generates a $C_0$-semigroup.

We start by proving the dissipativity properties.
\begin{thm}
    The operator $(\bA,D(\bA))$ is quasi-$m$-dissipative: for any $\phi=(u,\eta) \in D(\bA)$ there exists $\lambda_0 >0$ such that, for any $\lambda \geq \lambda_0$
  \begin{align}\label{eq:quasi-diss}
      \langle   \bA\phi,\phi \rangle \leq  \lambda \|\phi\|_{\cH}.
\end{align}
\end{thm}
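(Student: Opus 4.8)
The plan is to compute the bilinear form $\la\bA\phi,\phi\ra_\cH$ explicitly using the block structure of $\bA$, integrating by parts in the space variable for the elliptic part and in the memory variable $s$ for the transport part $-\partial_s$, and then to reorganise the result by completing a square so that the dissipative contribution becomes manifest. First I would split
\begin{equation*}
 \la\bA\phi,\phi\ra_\cH = \Big\la k_0\Delta u + K\eta,\, u\Big\ra_{\ldueo} + \big\la -\partial_s\eta,\,\eta\big\ra_\cX
\end{equation*}
according to the two components of $\cH$. In the first summand, since $u\in\hunozero$ and $\eta(s)\in\hunozero$ for a.e.\ $s$, Green's identity gives $\la k_0\Delta u,u\ra_{\ldueo} = -k_0\|\nabla u\|_{\ldueo}^2$ and $\la K\eta,u\ra_{\ldueo} = -\int_0^\infty k_1(s)\la\nabla\eta(s),\nabla u\ra_{\ldueo}\,\dd s$.

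For the second summand I would use $\nabla\partial_s\eta = \partial_s\nabla\eta$ to recognise the integrand as a total derivative, $\rho(s)\la\partial_s\nabla\eta(s),\nabla\eta(s)\ra = \tfrac12\rho(s)\,\partial_s\|\nabla\eta(s)\|_{\ldueo}^2$, and integrate by parts in $s$. Invoking $\rho'(s) = -k_1(s)$ from \eqref{eq:rho}, the compatibility condition $\eta(0)=u$ built into $D(\bA)$, and the vanishing of the boundary contribution at $s=+\infty$, this produces $\la-\partial_s\eta,\eta\ra_\cX = \tfrac12\rho(0)\|\nabla u\|_{\ldueo}^2 - \tfrac12\int_0^\infty k_1(s)\|\nabla\eta(s)\|_{\ldueo}^2\,\dd s$. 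Collecting the three contributions and completing the square in the memory terms, using $\int_0^\infty k_1(s)\,\dd s = \rho(0)$, I would arrive at the identity
\begin{equation*}
 \la\bA\phi,\phi\ra_\cH = (\rho(0)-k_0)\,\|\nabla u\|_{\ldueo}^2 \;-\; \frac12\int_0^\infty k_1(s)\,\big\|\nabla\eta(s)+\nabla u\big\|_{\ldueo}^2\,\dd s .
\end{equation*}
Since $k_1\ge 0$ by Hypothesis \ref{hp:kernel}, the integral is nonnegative and can be discarded, leaving $\la\bA\phi,\phi\ra_\cH \le (\rho(0)-k_0)\,\|\nabla u\|_{\ldueo}^2$, from which \eqref{eq:quasi-diss} should follow once the leading coefficient is controlled.

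The main obstacle I anticipate is twofold. The delicate analytic point is the justification of the vanishing boundary term at $s=+\infty$ in the integration by parts: for a general $\eta\in W^{1,2}_\rho(\R_+;\hunozero)$ one only knows a priori that $\rho^{1/2}\nabla\eta$ and $\rho^{1/2}\nabla\partial_s\eta$ lie in $L^2$, so I would first establish $\lim_{s\to\infty}\rho(s)\|\nabla\eta(s)\|_{\ldueo}^2=0$ by writing $\rho(s)\|\nabla\eta(s)\|_{\ldueo}^2$ as a convergent integral of its $s$-derivative and exploiting the summability of $k_1$, with a density argument on smooth compactly supported histories to make the manipulation rigorous. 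The second, more conceptual, point is the sign of the residual term $(\rho(0)-k_0)\|\nabla u\|_{\ldueo}^2$: it is nonpositive exactly when the equilibrium excess $\rho(0)=\int_0^\infty k_1(s)\,\dd s = k(\infty)-k_0$ does not exceed the instantaneous conductivity $k_0$, in which case $\la\bA\phi,\phi\ra_\cH\le 0$ and \eqref{eq:quasi-diss} holds with \emph{any} $\lambda_0>0$. Absorbing this gradient term in the remaining regime cannot be done against $\|u\|_{\ldueo}^2$ alone (there is no reverse Poincar\'e bound), so I expect this is where the standing structural assumptions on the creep function $k$, or equivalently the use of an equivalent energy norm on $\cH$ that redistributes the elliptic weight, must be brought in; I regard this as the genuinely delicate step of the argument.
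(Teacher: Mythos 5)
Your computation is correct and, up to its last line, it is exactly the paper's own proof: the same splitting of $\la \bA\phi,\phi\ra_\cH$ into the $\ldueo$-pairing of $k_0\Delta u+K\eta$ with $u$ and the $\cX$-pairing of $-\partial_s\eta$ with $\eta$, the same Green's formula, and the same integration by parts in $s$ using $\eta(0)=u$ and $\rho'=-k_1$ from \eqref{eq:rho}. The only real difference is in your favour: where the paper estimates the cross term by Cauchy--Schwarz plus Young's inequality with a parameter $\ep$, ending with the coefficient $-k_0+\frac{1-\ep/2}{1-\ep}\rho(0)$ in front of $\|u\|^2_{\hunozero}$, you complete the square exactly and get the sharper coefficient $\rho(0)-k_0$ (the $\ep\to0$ limit of the paper's). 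Your first anticipated obstacle, the boundary term at $s=+\infty$, is a non-issue: it enters the upper bound as $-\tfrac12\lim_{R\to\infty}\rho(R)\|\nabla\eta(R)\|^2_{\ldueo}\le 0$, so for the inequality you may simply discard it; and in fact the limit exists (all other terms in the integration by parts on $[0,R]$ converge) and must be $0$, since a positive limit would contradict $\int_0^\infty\rho\,\|\nabla\eta\|^2_{\ldueo}\,\dd s<\infty$.

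The step you leave open --- absorbing $(\rho(0)-k_0)\|\nabla u\|^2_{\ldueo}$ when $\rho(0)>k_0$ --- is indeed a gap in your proposal, but your diagnosis of it is correct, and you should know that the paper does not close it either. The paper's final deduction, $\la\bA\phi,\phi\ra\le\lambda\|\phi\|^2_\cH$ for $\lambda>\lambda_0:=\min\bigl\{0,\,-k_0+\tfrac{1-\ep/2}{1-\ep}\rho(0)\bigr\}$ (the $\min$ should at least be a $\max$), follows from its penultimate display only when the coefficient of $\|u\|^2_{\hunozero}$ is nonpositive, for precisely the reason you state: there is no reverse Poincar\'e inequality bounding $\|u\|^2_{\hunozero}$ by $\|u\|^2_{\ldueo}\le\|\phi\|^2_\cH$. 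Since $\frac{1-\ep/2}{1-\ep}>1$, nonpositivity can be arranged (for small $\ep$) exactly when $\rho(0)<k_0$, i.e.\ under a condition linking $k_0$ and $k_1$ of the very type the paper claims to dispense with. Worse, the uniform statement (one $\lambda_0$ for all of $D(\bA)$, which is what quasi-dissipativity means) genuinely fails in general: if $\rho(0)>2k_0$, take $u=e_n$ a normalized Dirichlet eigenfunction, $-\Delta e_n=\mu_n e_n$, and $\eta(s)=\theta_n(s)e_n$ with $\theta_n$ smooth, $\theta_n(0)=1$, $|\theta_n|\le1$, ${\rm supp}\,\theta_n\subset[0,2/\mu_n]$; then $\phi_n=(e_n,\eta)\in D(\bA)$ and a direct computation gives
\begin{align*}
  \la\bA\phi_n,\phi_n\ra
  =\mu_n\Bigl[\tfrac{\rho(0)}{2}-k_0-\int_0^\infty k_1\theta_n\,\dd s-\tfrac12\int_0^\infty k_1\theta_n^2\,\dd s\Bigr]
  \ge \mu_n\Bigl[\tfrac{\rho(0)}{2}-k_0-\tfrac32\int_0^{2/\mu_n} k_1\,\dd s\Bigr],
\end{align*}
while $\|\phi_n\|^2_\cH\le 1+2\rho(0)$; since $\int_0^{2/\mu_n}k_1\to0$, the quotient $\la\bA\phi_n,\phi_n\ra/\|\phi_n\|^2_\cH\to+\infty$. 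So your instinct is right: in the regime $\rho(0)\le k_0$ your identity already yields plain dissipativity and your proof is complete, while beyond that regime the inequality cannot be rescued by any choice of $\lambda_0$ without additional structural assumptions on $k$ or a modified norm on $\cH$; this is a defect of the theorem and of the paper's proof, not of your attempt.
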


\begin{proof}
    We proceed in the same spirit as Bonaccorsi and Da Prato and Tubaro \cite[Theorem 3.1]{BoDaTu}, but we include the proof for completeness. The difference, here, is that
   we have no conditions linking the constant $k_0$ and the function $k_1$.
   In contrast with \cite{BoDaTu}, this point doesn't allow to prove the pure dissipativity of $\bA$, but only quasi-$m$-dissipativity.

   We compute the scalar product
   \begin{multline*}
         \langle \bA\phi,\phi\rangle =
          k_0 \la \Delta^D u,u\ra_{L^2(\cO)} \\
       +\int_{\R^+} k_1(r) \la u,\Delta^D \eta(r) \ra_{\ldueo}{\rm d}
       r - \int_{\R^+} \rho(r) \la \nabla \eta(r),\nabla \eta^\prime (r)\ra_{\ldueo} {\rm d}r
    \end{multline*}
    and we get
    \begin{align*}
             \langle \bA\phi,\phi\rangle &= -k_0 \|x\|_{\hunozero}^2 -
           \int_{\R_+} k_1(r) \la x,\eta(r)\ra_{\hunozero} \dd r - \int_{\R^+} \rho(r) \frac{1}{2} \frac{\dd}{\dd r}
           \|\eta(r)\|^2_{\hunozero} \dd r \\
         & \leq -k_0 \|x\|_{\hunozero}^2 +  \int_{\R_+} k_1(r) \| x\|_{\hunozero}\|\eta(r)\|_{\hunozero} \dd r - \left.  \frac{1}{2}\rho(r)
           \|\eta(r)\|^2_{\hunozero}\right|_0^{+\infty} \\
       & \quad + \frac{1}{2} \int_{\R_+} \rho^\prime(r)  \|\eta(r)\|^2_{\hunozero} {\rm d} r.
    \end{align*}
    Now recall that $\rho \geq 0$ while $\rho^\prime = -k_1 \leq 0$; choose some $\ep >0$ and
     use the bound $ab -\frac{1}{2}(1-\ep) b^2\leq \frac{1}{2(1-\ep)}a^2$ with $a= \|x\|_{\hunozero}$ and $b=\|\eta(r)\|_{\hunozero}$ to get
   \begin{align*}
          \langle \bA\phi,\phi\rangle & \leq\left(-k_0+ \frac{1-\ep/2}{1-\ep}\rho(0)\right)\|x\|_{\hunozero}^2   + \frac{\ep}{2}\int_{\R^+} \rho^\prime(r) \|\eta(r)\|^2_{\hunozero} \dd r \\
      &\leq \lambda \|\phi\|_\cH^2,
    \end{align*}
    for any $\lambda > \lambda_0:= \min\left\{0,-k_0+ \frac{1-\ep/2}{1-\ep}\rho(0)\right\}$.
\end{proof}

Next, we consider the properties of the resolvent $R(\mu,\bA)$.
\begin{thm}
    For every $\mu>0$ the equation
   \begin{align}\label{eq:risolvente}
       (\mu-\bA)\phi=\psi, \quad \psi \in \cH,
   \end{align}
   has a unique solution $\phi \in D(\bA)$.
\end{thm}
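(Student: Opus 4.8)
The plan is to solve \eqref{eq:risolvente} by hand, exploiting the triangular structure of $\bA$. Write $\phi=\binom{v}{\eta}$ and the datum $\psi=\binom{g}{h}$ with $g\in\ldueo$ and $h\in\cX$; then \eqref{eq:risolvente} is equivalent to the system
\begin{align*}
\mu v-k_0\Delta v-\int_0^\infty k_1(s)\,\Delta\eta(s)\,\dd s &= g,\\
\mu\,\eta(s)+\partial_s\eta(s) &= h(s),\qquad \eta(0)=v.
\end{align*}
The second line is a linear first-order ODE in the variable $s\in\R_+$ with values in $\hunozero$, whose unique solution subject to the coupling condition $\eta(0)=v$ is
\begin{align*}
\eta(s)=e^{-\mu s}v+\int_0^s e^{-\mu(s-r)}h(r)\,\dd r.
\end{align*}
Thus $\eta$ is completely determined by $v$ and the datum $h$, and the only genuine unknown is $v$.

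Next I would insert this expression into the first equation. Since $k_1\in L^1(\R^+)$ the Laplace transform $\hat k_1(\mu)=\int_0^\infty e^{-\mu s}k_1(s)\,\dd s$ is finite, and a Fubini interchange rewrites the memory term as
\begin{align*}
\int_0^\infty k_1(s)\,\Delta\eta(s)\,\dd s=\hat k_1(\mu)\,\Delta v+\int_0^\infty\Big(\int_r^\infty e^{-\mu(s-r)}k_1(s)\,\dd s\Big)\Delta h(r)\,\dd r.
\end{align*}
Setting $a(\mu):=k_0+\hat k_1(\mu)>0$ (positivity because $k_0>0$ and $k_1\ge 0$), the first equation collapses to the single elliptic problem
\begin{align*}
\mu v-a(\mu)\,\Delta v=g+\Phi(h)\quad\text{in }\cO,\qquad v\in\hunozero,
\end{align*}
where $\Phi(h)$ denotes the last integral above and carries all the dependence on the datum $h$.

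Because $\mu>0$ and $a(\mu)>0$, the operator $\mu-a(\mu)\Delta$ is associated with the bounded, symmetric and coercive form $(u,w)\mapsto\mu\la u,w\ra_{\ldueo}+a(\mu)\la\nabla u,\nabla w\ra_{\ldueo}$ on $\hunozero$; Lax--Milgram (equivalently, the spectral theorem for the positive self-adjoint operator $-\Delta$) then yields a unique $v\in\hunozero$ solving the elliptic equation, provided $g+\Phi(h)$ is known to lie in $H^{-1}(\cO)$. Defining $\eta$ from this $v$ by the explicit formula, I would verify $\binom{v}{\eta}\in D(\bA)$: the identity $\eta(0)=v$ holds by construction, the membership $\eta\in W^{1,2}_\rho(\R_+;\hunozero)$ reduces to $\eta\in\cX$ since $\partial_s\eta=h-\mu\eta$ already lies in $\cX$, and---crucially---the remaining requirement $k_0\Delta v+K\eta\in\ldueo$ is automatic, since the first equation rearranges to $k_0\Delta v+K\eta=\mu v-g$, whose right-hand side lies in $\ldueo$. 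So the domain constraint is gained for free from the equation itself.

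Uniqueness is built into this construction and holds for every $\mu>0$: if $\psi=0$ then necessarily $\eta(s)=e^{-\mu s}v$ and $v$ solves $\mu v-a(\mu)\Delta v=0$, whence $v=0$ by coercivity and hence $\phi=0$. The main obstacle is the analytic bookkeeping of the memory term, namely showing that $\Phi(h)\in H^{-1}(\cO)$ and that the reconstructed $\eta$ has finite weighted norm in $\cX$. Here the weight $\rho(r)=\int_r^\infty k_1$ and the monotonicity of $k_1$ from Hypothesis \ref{hp:kernel} are essential: the homogeneous part $e^{-\mu s}v$ is controlled by $\int_0^\infty\rho(s)e^{-2\mu s}\,\dd s\le\rho(0)/(2\mu)<\infty$, while the convolution part is handled by Young's inequality against $k_1\in L^1(\R^+)$ together with the pointwise bound $\int_r^\infty e^{-\mu(s-r)}k_1(s)\,\dd s\le\min\{\rho(r),\mu^{-1}k_1(r)\}$. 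Making these weighted estimates uniform, so that the memory integral genuinely defines an element of $H^{-1}(\cO)$ even for the weakly singular kernels permitted by Hypothesis \ref{hp:kernel}, is the delicate point, and I would isolate it as a preliminary lemma on the boundedness of $K$ and of the associated convolution operator on $\cX$.
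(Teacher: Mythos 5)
Your proposal is correct and follows essentially the same route as the paper's (sketched) proof: both exploit the triangular structure, solve the transport component by variation of constants with the coupling $\eta(0)=v$, substitute into the first equation to reduce it to the elliptic resolvent problem $\mu v-\bigl(k_0+\hat k_1(\mu)\bigr)\Delta v = g+\Phi(h)$ (your $a(\mu)$ is exactly the paper's constant $c_{k,\mu}$), and recover the domain condition $k_0\Delta v+K\eta=\mu v-g\in\ldueo$ from the equation itself. The only difference is one of detail: the paper cites \cite{BoDaTu} and leaves the weighted estimates as "straightforward," whereas you correctly identify and outline the bounds (monotonicity of $\rho$, $\kappa_\mu(r)\le\min\{\rho(r),\mu^{-1}k_1(r)\}$, Young's inequality) needed to justify $\eta\in\cX$ and $\Phi(h)\in H^{-1}(\cO)$.
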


\begin{proof}
    We give only a sketch of the proof, since it essentially repeats the arguments of  Bonaccorsi \& Da Prato \& Tubaro \cite[Proposition 3.2]{BoDaTu}.
  Let $\phi=\binom{u}{\eta}$ and $\psi=\binom{v}{\xi}$. Then equation
    \eqref{eq:risolvente} is equivalent to
  \begin{align*}
       &\mu u- k_0\Delta^D u- K \eta = v\\
      &\mu \eta(s)-T\eta(s)=\xi(s).
  \end{align*}
   From the variation of constant formula we get
   \begin{align*}
        \eta(s)=e^{-\mu s}\eta(0)+\int_0^s e^{-\mu(s-r)} \xi(r)\dd r;
   \end{align*}
   moreover, using the monotonicity property of $\rho$ it is possible to prove that $\eta \in W^{1,2}_\rho(\R_+;\hunozero)$.
    Straightforward calculation, gives
    \begin{align*}
         u= \frac{1}{c_{k,\mu}} R\left( \frac{\mu}{c_{k,\mu}},\Delta^D\right) \tilde{v},
    \end{align*}
      where $c_{k,\mu}$ is a positive constant depending only on $k$ and $\mu$
while $\tilde{v}$ is the function
    \begin{align*}
           \tilde{v}:= v+ \int_0^{+\infty} k_1(r) \Delta^D \int_0^s e^{-\mu(s-r)} \eta(r) \dd s\ \dd r.
   \end{align*}
     Obviously $u\in D(\Delta^D)$.
     Finally, we notice that
    $$
        k_0 \Delta^D u +K \eta = \mu u -v \in L^2(\cO);
 $$
     hence, it turns out that $\phi=\binom{u}{\eta} \in D(\bA)$.
\end{proof}

Taking into account the above results, we can deduce the generation properties for the 
operator $\bA$. Precisely, we have
\begin{prop}\label{prop:gen}
     Under Hypothesis \ref{hp:kernel}, \ref{hp:f} and \ref{hp:noise} the operator 
   $(\bA,D(\bA))$ generates a strongly continuous semigroup.
\end{prop}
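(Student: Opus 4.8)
The plan is to invoke the Lumer--Phillips theorem, for which the two preceding theorems supply almost everything that is needed. Recall that this theorem asserts that a densely defined, dissipative operator $T$ on a Hilbert space whose range $\mathrm{Range}(\mu_0 - T)$ coincides with the whole space for some $\mu_0 > 0$ generates a $C_0$-semigroup of contractions. The operator $\bA$ is not dissipative but only quasi-dissipative, so first I would pass to the shifted operator $\bA - \lambda I$, with $\lambda \ge \lambda_0$ as in the first theorem, and prove generation for it; since adding a multiple of the identity merely rescales the semigroup by the factor $e^{\lambda t}$, generation for $\bA - \lambda I$ immediately yields generation for $\bA$.

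First I would assemble the two structural ingredients. The quasi-dissipativity estimate established in the first theorem, which reads $\langle \bA\phi,\phi\rangle_\cH \le \lambda\|\phi\|_\cH^2$ for all $\phi \in D(\bA)$, gives, upon subtracting $\lambda\|\phi\|_\cH^2$, that $\langle(\bA-\lambda I)\phi,\phi\rangle_\cH \le 0$, i.e. $\bA - \lambda I$ is dissipative. The second theorem shows that for every $\mu > 0$ the equation $(\mu - \bA)\phi = \psi$ is uniquely solvable in $D(\bA)$, so $\mathrm{Range}(\mu - \bA) = \cH$ for all $\mu > 0$. Writing $\mu - \bA = (\mu - \lambda) - (\bA - \lambda I)$ and choosing any $\mu > \lambda$, this reads $\mathrm{Range}(\mu_0 - (\bA - \lambda I)) = \cH$ with $\mu_0 := \mu - \lambda > 0$, which is exactly the surjectivity hypothesis of Lumer--Phillips for the operator $\bA - \lambda I$.

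The one ingredient still missing, and the step I expect to require genuine work, is the density of $D(\bA)$ in $\cH$: this is a standing hypothesis of the Lumer--Phillips theorem but is not established by either of the two theorems above. Here I would approximate an arbitrary pair $(v,\eta) \in \cH = L^2(\cO) \times \cX$ by elements of $D(\bA)$. Since $\hunozero$ is dense in $L^2(\cO)$ and the second component already lives in $L^2_\rho(\R_+;\hunozero)$, the real issue is to reconcile the approximation with the two constraints defining $D(\bA)$, namely the compatibility condition $\eta(0) = v$ at the origin and the regularity requirement $\Delta v + K\eta \in \cX$. The natural device is to first approximate $\eta$ by functions that are smooth in the time variable and vanish near $s = 0$, so that the trace $\eta(0)$ becomes free and can be matched to an $\hunozero$-approximation of $v$ independently, and then to mollify; the weight $\rho$ is integrable and nonincreasing, which keeps these truncation and mollification errors small in the $\cX$-norm. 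Once density is in hand, the Lumer--Phillips theorem applies to $\bA - \lambda I$ and produces a $C_0$-semigroup of contractions $S(t)$; then $e^{\lambda t}S(t)$ is the strongly continuous semigroup generated by $\bA$, which proves the proposition.
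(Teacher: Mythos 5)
Your proposal is correct and follows essentially the same route as the paper, whose proof is stated in one line as ``a perturbation method and the Lumer--Phillips Theorem'': your shift $\bA-\lambda I$ is exactly that perturbation, and the two preceding theorems supply the dissipativity and range conditions as you use them. The only point where you go beyond the paper is in flagging and sketching the density of $D(\bA)$ (which the paper silently assumes); your truncation-and-matching argument is plausible, and alternatively density follows automatically since a dissipative operator with full range of $\mu_0-\bA$ on a Hilbert (hence reflexive) space is necessarily densely defined.
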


\begin{proof}
    The result follows by a direct application of a perturbation method and the Lumer-Phillips Theorem. 
\end{proof}

\section{The stochastic convolution}\label{sec:stoc-conv}

The main object of investigation of this section is the stochastic convolution
corresponding with our problem, that is the process
\begin{align*}
      W_\bA(t):= \int_0^t e^{(t-s)\bA} \sqrt{\bQ}\dd W(s), \qquad t\geq 0.
\end{align*} 

In particular, our purpose is to prove that $(W_A(t))_{t\geq 0}$ is a well-defined mean-square
continuous gaussian process with values in $\cH$.
Following the approach of  Da Prato and Clement \cite{cle-dap97}, Bonaccorsi and Da Prato and Tubaro \cite{BoDaTu}, 
we can give a meaning to the stochastic convolution through the study of the 
 so-called \emph{resolvent family} associated with an abstract homogeneous linear Volterra equation 
of type
\begin{align}\label{eq:eq-risolv}
    v(t) =  k * \Delta^D v(t)
\end{align}
where $k$ is a kernel satsfying Hypothesis \ref{hp:kernel} and where $\Delta^D$ denotes the Laplace operator on $\bar{\cO}$ with Dirichlet boundary conditions.
The concept of the resolvent plays a central role for the theory of linear Volterra equations and can be applied to inhomogeneous problem to derive a variation of parameters formula.  The main tools for the resolvent are described in detail in the monograph \cite{pruss}. In the next
subsection we recall a few basic concepts and results.

\subsection{The resolvent family}\label{ssec:res-fam}

Following \cite[Section 1]{pruss}, we define the resolvent family
 for the equation \eqref{eq:eq-risolv} as
\begin{defn}
    A family $(S(t))_{t\geq 0}$ of bounded linear operators in $X$ is called a resolvent 
   for equation \eqref{eq:eq-risolv}
    if the following conditions are satisfied:
    \begin{enumerate}
         \item[(S1)] $S(0)=I$ and, for all $x\in X$, $t\mapsto S(t)x$ is continuous on $\R^+$;
         \item[(S2)] $S(t)$ commutes with $\Delta^D$, that is for a.e. $t\geq 0$, 
         $S(t) D(\Delta^D) \subset D(\Delta^D)$ and
         \begin{align*}
             \Delta^D S(t) \bar{v} = S(t) \Delta^D \bar{v}, \quad v\in D(\Delta^D);
         \end{align*}
        \item[(S3)] for any $\bar{v}\in D(\Delta^D)$, $t \mapsto S(t)\bar{v}$ is a strong solution 
      of \eqref{eq:uncont} on $[0,T]$, for any $T>0$. 
    \end{enumerate}
\end{defn}
It turns out that if the kernel $k$ satisfies Hypothesis \ref{hp:kernel} (or, more generally,
if it is $\theta$-sectorial for $\theta <\pi$), then equation  \eqref{eq:eq-risolv} admits a resolvent $(S(t))_{t\geq 0}$ which is uniformly bounded in $L^2(\cO)$ (see 
\cite[Corollary 3.3]{pruss}). Consequently (see \cite[Proposition 1.1]{pruss}), problem \eqref{eq:eq-risolv} is well-posed and
its strong solution is given by the function
$v(t)=S(t)\bar{v}$. Besides, since $k(t)$ belongs to $BV_{loc}(\R_+)$, 
$S(t)$ turns out to be differentiable and consequently (by differentiation of equation \eqref{eq:eq-risolv}) 
the function $v$ is the mild solution of 
the homogeneous Cauchy problem
\begin{align}\label{eq:cauchyhom}
    \begin{cases}
    \  v^\prime(t)= \dd k* \Delta^D v(t) \\
   \  v(0)=\bar{v} \in \hunozero.
     \end{cases}
\end{align} 
Here the term $\dd k * \Delta^D v(t)$ denotes the function
\begin{align*}
     \int_0^t k_0 \Delta^D v(s)\delta_0(s) + \int_0^t k_1(t-s) \Delta^D v(s)  \dd s.
\end{align*}
Analogously, it can be proved that if
$g$ is a function belonging to $L^{1}(0,T;X)$,
 then the Cauchy problem
\begin{align}\label{eq:cauchyinh}
    \begin{cases}
    \  v^\prime= \dd k* \Delta^D v + g\\
   \  v(0)=\bar{v}\in \hunozero
     \end{cases}
\end{align} 
is well-posed too and its (unique) mild solution 
can be represented through the variation of parameter formula as 
\begin{align*}
     v(t)=S(t) \bar{v}+ \int_0^t S(t-\tau)g(\tau) \dd \tau, \quad t\geq 0.
\end{align*}
For a full discussion about the notion of well-posedness for equation
\eqref{eq:eq-risolv}, of mild solution for  
problems of type \eqref{eq:cauchyhom}, \eqref{eq:cauchyinh} and their relationship between
the resolvent family we refer to \cite[Section 1]{pruss}. 

Here we want to emphasize that the above arguments can be applied to 
the inhomogeneous Volterra equation {\bf \eqref{eq:uncont}} to obtain existence and
uniqueness of a mild solution and its representation in terms of the resolvent 
family corresponding with the kernel $k=1 * \dd k$. In fact, 
equation {\bf \eqref{eq:uncont}} is equivalent to 
\begin{equation}\label{eq:uncont2}
\begin{aligned}
  &  v_t(t,x) = k_0  \Delta v(t,x) +\int_0^t k_1(s) \Delta v(s,x) \dd s\\
    & \qquad \qquad \qquad + \int_t^\infty k(s) \Delta v(t-s,x) \dd s; \qquad t>0,
\end{aligned}
\end{equation}
with boundary and initial conditions given by:
\begin{align*}
    & v(-t,x)=v_0(-t,x), \quad t\geq0,\\
    &v(t,x)=0, \quad t\geq 0, x\in \partial \cO.
\end{align*}
In abstract form, we have
\begin{align*}
     v^\prime(t) &= \dd k * \Delta^D v(t)+ \int_t^\infty k(s) \Delta^D v_0(t-s) \dd s, \quad t>0\\
     v(0)&=v_0(0) \in \hunozero.
\end{align*}
Therefore, integrating \eqref{eq:uncont2} over $[0,t]$ we obtain
\begin{align}\label{eq:conv1dkD}
    v(t) = v_0(0) + \int_0^t (\dd k * \Delta v)(s)  + 
    \int_0^t \dd s \left(\int_0^\infty k(s+r) \Delta v_0(-r) \dd r\right). 
\end{align}
Now, by the associativity property of the convolution product, the second term in the right member of \eqref{eq:conv1dkD} gives
\begin{align*}
     \int_0^t \dd k * \Delta v(s,x)  &= 1 *( \dd k * \Delta v) 
     = (1* \dd k) * \Delta v\\
    & = k * \Delta v = \int_0^t k(s) \Delta v(t-s) \dd s.
\end{align*}
Hence equation \eqref{eq:uncont} can be rewritten as follows:
\begin{align}\label{eq:Voltlinhom}
     v(t)= v_0(0)+ \int_0^t k(t-s) \Delta v(s) \dd s + h(t)
\end{align}
where the function $h$ is given by
\begin{align*}
    h(t)=\int_0^t \dd s\left( \int_0^\infty k(s+r) \Delta v_0(r) \dd r \right).
\end{align*}
Now the variation of parameters formula implies that the function
\begin{align*}
    v(t)= S(t)v_0(0) + \int_0^t S(t-s)h(s) \dd s
\end{align*}
is a mild solution of the Volterra equation \eqref{eq:uncont2}, provided that 
$h\in L^1(0,T;X)$. We notice that the condition $v_0 \in L^2_\rho(\R^+;\hunozero)$ 
assures the requested regularity for the function $h$.

\subsection{The scalar resolvent family}\label{ssec:screfa}
Suppose that $(S(t))_{t\geq 0}$ is the resolvent family for equation \eqref{eq:eq-risolv}
and
let $\left\{ \mu_j\right\}_{j\in \bN}$ be the set of eigenvalues of $\Delta^D$ with respect
to the basis $\{e_j\}_{j\in\ bN}$. For any $j\in \bN$, we introduce the following one-dimensional Volterra equation
\begin{align}\label{eq:risolvente2}
    s_j(t)+ \mu_j (k * s_j)(t)  =1.
\end{align}

Then (see \cite[Section 1.3]{pruss}) a unique solution to \eqref{eq:risolvente2} exists and it satisfies
\begin{align*}
    S(t)e_j=s_j(t)e_j, \qquad t\geq0.
\end{align*}
In particular, the resolvent family $S(t)$ admits a decomposition in the basis $\{e_j\}$ of $L^2(\cO)$ in terms of the solutions $s_j$ to \eqref{eq:risolvente2}. 

In the sequel we state and prove some useful estimates on the scalar resolvent functions $s_j$. They are crucial to study the stochastic convolution and descend immediately from the assumption on the kernel $k$.

\begin{lem}\label{lem:sol-sj}
   Let $k$ satisfy Hypothesis \ref{hp:kernel}. Then, for any $j\in \bN$, equation \eqref{eq:risolvente2} admits a solution $s_j(t)$ such that
the following properties hold: 
\begin{enumerate}
\item \label{it:sj}  $ | s_j(t) |\leq M$ for all $t>0$;
\item \label{it:sj'}  
      $\int_0^\infty |s^\prime_j (t)| \dd t \leq C$;
\item \label{it:tsj'} 
      $\int_0^\infty t|s^\prime_j (t)|\dd t \leq C \mu_j^{-1}$;
\item \label{it:sjprimo} 
   $\int_0^{+\infty} |s_j(t)|{\rm d}t \leq  C\mu_j^{-1}$.
where $M$ and $C$ denotes suitable positive constants.
  \end{enumerate}
\end{lem}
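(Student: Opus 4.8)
The plan is to base everything on two facts about the scalar kernel: that $k$ is a \emph{completely positive} kernel, and an explicit Laplace-transform identity for $s_j$. Taking Laplace transforms in \eqref{eq:risolvente2} and using $\hat{k}(\lambda)=(k_0+\hat{k}_1(\lambda))/\lambda$, one finds
\begin{equation*}
\hat{s}_j(\lambda)=\frac{1}{\lambda\bigl(1+\mu_j\hat{k}(\lambda)\bigr)}=\frac{1}{\lambda+\mu_j\bigl(k_0+\hat{k}_1(\lambda)\bigr)},\qquad \Re\lambda>0,
\end{equation*}
which is well defined and analytic on the right half-plane because $\theta$-sectoriality with $\theta<\pi/2$ (see \eqref{eq:sect}) keeps the denominator away from zero there; note also that evaluating \eqref{eq:risolvente2} at $t=0$ gives $s_j(0)=1$ since $(k*s_j)(0)=0$.

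First I would record the qualitative behaviour of $s_j$. Since $k(t)=k_0+\int_0^t k_1$ with $k_0>0$ and $k_1$ nonnegative and nonincreasing (conditions h1) and h2)), $k$ is a creep function and hence completely positive in the sense of \cite[Section~4]{pruss}. For such a kernel the solution of $s_j+\mu_j\,k*s_j=1$ is nonnegative; moreover, differentiating the equation shows $s_j'=-\mu_j r_j$, where $r_j\geq0$ solves $r_j+\mu_j\,k*r_j=k$, so $s_j$ is also nonincreasing. This yields property \ref{it:sj} at once with $M=1$. Since $s_j$ is differentiable (the resolvent is differentiable because $k\in BV_{loc}(\R^+)$), nonincreasing and integrable (so $s_j(\infty)=0$),
\begin{equation*}
\int_0^\infty |s_j'(t)|\,\dd t=-\int_0^\infty s_j'(t)\,\dd t=s_j(0)-s_j(\infty)=1,
\end{equation*}
uniformly in $j$, which is property \ref{it:sj'}.

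For the $\mu_j^{-1}$-estimates I would treat property \ref{it:sjprimo} first. Because $s_j\geq0$, the monotone convergence theorem gives $\int_0^\infty s_j(t)\,\dd t=\lim_{\lambda\downarrow0}\hat{s}_j(\lambda)$ (a value in $[0,+\infty]$), and from the displayed formula, together with $\hat{k}_1(0)=\int_0^\infty k_1=\rho(0)$ and $k(\infty)=k_0+\rho(0)>0$,
\begin{equation*}
\lim_{\lambda\downarrow0}\hat{s}_j(\lambda)=\frac{1}{\mu_j\bigl(k_0+\hat{k}_1(0)\bigr)}=\frac{1}{\mu_j\bigl(k_0+\rho(0)\bigr)}=\frac{1}{\mu_j\,k(\infty)}.
\end{equation*}
Hence $s_j\in L^1(\R^+)$ and $\int_0^\infty|s_j|=\int_0^\infty s_j\leq C\mu_j^{-1}$ with $C=1/k(\infty)$. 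Property \ref{it:tsj'} then follows by integration by parts: using $s_j'\leq0$ and $t\,s_j(t)\to0$ as $t\to\infty$ (since $s_j$ is nonincreasing and integrable, $t\,s_j(t)\leq2\int_{t/2}^t s_j\to0$),
\begin{equation*}
\int_0^\infty t\,|s_j'(t)|\,\dd t=-\int_0^\infty t\,s_j'(t)\,\dd t=-\bigl[t\,s_j(t)\bigr]_0^\infty+\int_0^\infty s_j(t)\,\dd t=\int_0^\infty s_j(t)\,\dd t\leq C\mu_j^{-1}.
\end{equation*}

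The main obstacle is the qualitative step, namely establishing that $s_j$ is nonnegative and nonincreasing. This is exactly where the structural hypotheses on the kernel enter: complete positivity already follows from h1) and h2), while the sharper convexity of $-k_1'$ in h3) is what underlies the sectoriality bound \eqref{eq:sect} and hence the analyticity of $\hat{s}_j$ on $\Re\lambda>0$ required to justify the Laplace/Tauberian manipulations. Once monotonicity and $L^1$-integrability of $s_j$ are in hand, the two integration-by-parts identities and the final-value computation are routine, and all four bounds are obtained with $M=1$ and $C=1/k(\infty)$ independent of $j$.
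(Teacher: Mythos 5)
There is a genuine gap, and it sits at the foundation of your argument: the claim that $k$, being a creep function, is completely positive is false. Complete positivity in the sense of \cite[Section 4]{pruss} essentially forces the kernel to behave like a \emph{nonincreasing} one: $c$ is completely positive if and only if there exist $\alpha\geq 0$ and a nonnegative, nonincreasing $b$ with $\alpha c + b*c \equiv 1$. For the present kernel $k(t)=k_0+\int_0^t k_1(s)\,\dd s$, which is \emph{nondecreasing} with $k(0^+)=k_0>0$, letting $t\to 0^+$ forces $\alpha=1/k_0$, and then $(b*k)(t)=1-k(t)/k_0<0$ for every $t>0$ with $k_1\not\equiv 0$, impossible when $b,k\geq 0$. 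Concretely, take $k_0=1$, $k_1(t)=e^{-t}$ (which satisfies h1)--h3)) and $\mu_j=1$: then
\begin{equation*}
\hat{s}_j(\lambda)=\frac{\lambda+1}{\lambda^2+2\lambda+2},\qquad\text{i.e.}\qquad s_j(t)=e^{-t}\cos t,
\end{equation*}
which is neither nonnegative nor nonincreasing (this oscillation is the expected hyperbolic signature of Gurtin--Pipkin heat conduction). With that, your whole chain collapses: the bound $M=1$ in item 1 no longer follows; the identity $\int_0^\infty|s_j'|=s_j(0)-s_j(\infty)$ in item 2 holds only for monotone $s_j$ (in general one only has $\int_0^\infty|s_j'|\geq|s_j(0)-s_j(\infty)|$); the monotone-convergence identification $\int_0^\infty|s_j|=\lim_{\lambda\downarrow 0}\hat{s}_j(\lambda)$ in item 4 needs $s_j\geq 0$, and without it the Laplace limit controls only the \emph{signed} integral (in the example $\int_0^\infty s_j=1/2$ while $\int_0^\infty|s_j|$ is strictly larger); and the integration by parts for item 3 uses $s_j'\leq 0$ throughout.

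This is also not the paper's route. The paper gets item 1 from the \emph{sectoriality} of the kernel (\cite[Corollary 3.3]{pruss}), a frequency-domain consequence of $3$-monotonicity rather than a positivity property, and items 2 and 3 are quoted from Monniaux--Pr\"uss \cite{mon-pru97} (via the relation $s_j'=-\mu_j r_j$), where the convexity hypothesis h3) enters in an essential and non-elementary way. Item 4 is then \emph{derived} from items 2 and 3: integrability of $s_j'$ gives existence of $s_j(\infty)$; the Laplace-transform computation --- the one part of your argument that does survive --- gives $s_j(\infty)=\lim_{\lambda\to 0^+}\lambda\hat{s}_j(\lambda)=0$ since $k_0+\hat{k}_1(0)=k(\infty)>0$; and then $|s_j(t)|\leq\int_t^\infty|s_j'(\tau)|\,\dd\tau$ plus Fubini yields $\int_0^\infty|s_j|\leq\int_0^\infty\tau|s_j'(\tau)|\,\dd\tau\leq C\mu_j^{-1}$. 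Note the direction: the paper deduces item 4 from item 3, whereas you tried to deduce item 3 from item 4; the latter cannot be repaired without the (false) sign information, so a self-contained proof must instead engage with the frequency-domain estimates behind items 1--3.
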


\begin{proof}
    Assertion \ref{it:sj} follows from \cite[Corollary 3.3]{pruss}, while assertions \ref{it:sj'}
   and \ref{it:tsj'} are contained in Monnieaux and Pruss \cite[Proposition 6]{mon-pru97}
   (observe the relation $s^\prime_j(t)=-\mu_j r_j(t)$ to connect the notations).
   To prove \ref{it:sjprimo}, we notice that
   $$
        s_j(t)= s_j(R)-\int_t^R s_j^\prime(\tau) \dd \tau.
   $$
    Hence assertion \ref{it:sj'} implies that the limit of $s_j(R)$ for $R\to \infty$ exists; moreover,
    we have
   \begin{align*}
       \lim_{R \to \infty}s_j(R)= s_j(0)+ \lim_{R \to \infty}\int_0^R s_j^\prime(\tau) \dd \tau = 1+\int_0^\infty s_j^\prime(\tau) \dd \tau.
   \end{align*}
   We observe that the last term in the above equality can be rewritten as
    \begin{align*}
          \lim_{\lambda \to 0^+} \int_0^\infty e^{-\lambda \tau}s_j^\prime(\tau) \dd \tau 
          = 
         & \lim_{\lambda \to 0^+}\left. s_j(\tau) e^{-\lambda \tau} \right|_{0}^\infty
          + \lim_{\lambda \to 0^+} \lambda \hat{s}_j(\lambda)  \\
          = \ & -1 + \lim_{\lambda \to 0^+} \lambda \hat{s}_j(\lambda).
   \end{align*}
Further, since $s_j$ satisfies equation \eqref{eq:risolvente2}, we get
\begin{align}\label{eq:trLsj}
    \lambda \hat{s}_j(\lambda)= \frac{1}{\lambda + \mu_j \hat{k}(\lambda)} =
   \frac{\lambda}{\lambda^2 + k_0+ \hat{k}_1 (\lambda)};
\end{align}
in fact, we have
$$
     \hat{s}_j (\lambda) + \mu_j \hat{k}(\lambda)\hat{s}_j(\lambda) =\frac{1}{\lambda},
$$
and  
$$
     \hat{k}(\lambda)= \frac{k_0}{\lambda} + \frac{\hat{k}_1(\lambda)}{\lambda}.
$$
We notice that, since $k_1$ belongs to $L^1(\R^+)$, for any $\lambda \geq 0$ it holds
\begin{align*}
      |\hat{k}_1(\lambda)|=\left|\int_0^\infty e^{-\lambda t} k_1(t) \dd t\right|
       \leq \int_0^\infty k_1(t) \dd t <\infty.
\end{align*}
Taking into account the last inequality and equality \eqref{eq:trLsj} we see that
the limit  of $s_j(R)$ for $R\to \infty$ satisfies
\begin{align*}
      \lim_{R\to \infty} s_j(R)= \lim_{\lambda \to 0^+} \frac{\lambda}{\lambda^2 + k_0+ \hat{k}_1 (\lambda)} =0.
\end{align*}
Therefore
\begin{align*}
     s_j(t)=-\int_t^\infty s^\prime_j(\tau)\dd \tau
\end{align*}
yields
\begin{align*}
     \int_0^\infty |s_j(\tau)| \dd \tau \leq \int_0^\infty \int_t^\infty |s^\prime_j(\tau)|\dd \tau \, \dd t = \int_0^\infty \tau |s_j^\prime(\tau)| \dd \tau \leq C\mu^{-1/\delta},
\end{align*}
by assertion \ref{it:tsj'}.
\end{proof}

For further use, we conclude this subsection with an estimate concerning the norm
of $s_j$ in $L^2(\R^+)$.
\begin{lem}\label{lem:sj2}
    Suppose that the kernel $k$ is subject to Hypothesis \ref{hp:kernel}. Then for each
    $\theta \in (0,1)$ and for any $T>0$,, there exists a constant $C_{\theta,T} >0$ (depending only on $\theta$ and $T$) such that 
    \begin{align*}
        \int_0^T \dd \tau \int_0^\tau s_j^2(\sigma) \dd \sigma \leq C_{\theta,T} \mu_j^{-(\theta+1)/\delta}. 
   \end{align*}
\end{lem}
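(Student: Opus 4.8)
The plan is to reduce the iterated integral to a single time integral and then to read off the required negative power of $\mu_j$ by interpolating the uniform bound of Lemma \ref{lem:sol-sj} against its integrated decay estimates. First I would apply Tonelli's theorem to exchange the order of integration,
\begin{equation*}
\int_0^T \dd\tau \int_0^\tau s_j^2(\sigma)\,\dd\sigma = \int_0^T (T-\sigma)\, s_j^2(\sigma)\,\dd\sigma \le T \int_0^T s_j^2(\sigma)\,\dd\sigma ,
\end{equation*}
so that the whole statement reduces to an estimate of $\|s_j\|^2_{L^2(0,T)}$ carrying the correct power of $\mu_j$; the prefactor $T$ is harmless and will be absorbed into $C_{\theta,T}$.

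Next I would estimate $\int_0^T s_j^2$ by interpolation. Writing $s_j^2 = |s_j|^{1-\theta}\,|s_j|^{1+\theta}$ and bounding the first factor by $M^{1-\theta}$ via item \ref{it:sj} of Lemma \ref{lem:sol-sj}, it remains to control $\int_0^T |s_j(\sigma)|^{1+\theta}\,\dd\sigma$. I would bound this quantity by combining the $L^1$-decay of $s_j$ from item \ref{it:sjprimo} of Lemma \ref{lem:sol-sj} (equivalently the estimate on $\int_0^\infty \tau|s_j'(\tau)|\,\dd\tau$ in item \ref{it:tsj'}) with the representation $s_j(\sigma)=-\int_\sigma^\infty s_j'(\tau)\,\dd\tau$ already exploited in the proof of that lemma. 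The decisive input is the intrinsic \emph{time scale} of the scalar resolvent: the $\theta$-sectoriality of $k$ recorded in \eqref{eq:sect}--\eqref{eq:delta}, together with the identity \eqref{eq:trLsj} for $\lambda\hat s_j(\lambda)$, dictates a rescaling of the form $\sigma\mapsto \mu_j^{1/\delta}\sigma$ under which the family $\{s_j\}$ collapses, up to constants independent of $j$, onto a single bounded profile. Performing this change of variables in $\int_0^T |s_j|^{1+\theta}$ is what I expect to convert the interpolation exponent $1+\theta$ and the sectorial exponent $\delta$ into the claimed power $\mu_j^{-(1+\theta)/\delta}$.

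The main obstacle is precisely this last step: producing the exponent $(1+\theta)/\delta$ rather than the exponent $1/\delta$ that a crude $L^\infty$--$L^1$ interpolation of $\|s_j\|^2_{L^2}$ would yield on its own. To reach the sharper power I would keep the truncation to $[0,T]$ (rather than passing to the half-line) and use the sectorial estimate \eqref{eq:trLsj} quantitatively, invoking the constraint $1+\theta>\delta$ from Hypothesis \ref{hp:noise} to ensure that the rescaled integral $\int_0^{\mu_j^{1/\delta}T}|\Phi|^{1+\theta}$ converges and stays bounded uniformly in $j$. I anticipate that the delicate points are the bookkeeping of this $j$-uniformity and the verification that the self-similar profile genuinely dominates each $s_j$ up to a constant independent of $j$; once these are in place, assembling items \ref{it:sj}, \ref{it:tsj'} and \ref{it:sjprimo} of Lemma \ref{lem:sol-sj} into the final bound is routine.
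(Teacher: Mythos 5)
Your first two steps are sound: the Tonelli reduction $\int_0^T\dd\tau\int_0^\tau s_j^2\,\dd\sigma=\int_0^T(T-\sigma)s_j^2(\sigma)\,\dd\sigma\le T\int_0^Ts_j^2(\sigma)\,\dd\sigma$ and the pointwise bound $s_j^2\le M^{1-\theta}|s_j|^{1+\theta}$ (item \ref{it:sj} of Lemma \ref{lem:sol-sj}) are both correct. The decisive third step, however, fails, and it fails for a structural reason: a change of variables can never produce the exponent $(1+\theta)/\delta$. Even granting your (unproved, and not available from Lemma \ref{lem:sol-sj} or Hypothesis \ref{hp:kernel}) self-similar collapse $s_j(\sigma)=\varphi(\mu_j^{1/\delta}\sigma)$ onto a single profile $\varphi$, the substitution $r=\mu_j^{1/\delta}\sigma$ gives
\begin{equation*}
\int_0^T |s_j(\sigma)|^{1+\theta}\,\dd\sigma=\mu_j^{-1/\delta}\int_0^{\mu_j^{1/\delta}T}|\varphi(r)|^{1+\theta}\,\dd r\;\le\;\mu_j^{-1/\delta}\int_0^\infty|\varphi(r)|^{1+\theta}\,\dd r,
\end{equation*}
i.e.\ exactly one Jacobian factor $\mu_j^{-1/\delta}$, no matter what power of $|s_j|$ sits inside the integral; the rescaled integral converges to the constant $\int_0^\infty|\varphi|^{1+\theta}$ and does not decay in $j$. (Test case: $s_j(\sigma)=e^{-\mu_j^{1/\delta}\sigma}$ yields $\int_0^T|s_j|^{1+\theta}\,\dd\sigma\sim\mu_j^{-1/\delta}/(1+\theta)$.) Consequently your scheme can only deliver $\int_0^T\dd\tau\int_0^\tau s_j^2\,\dd\sigma\le C_{\theta,T}\,\mu_j^{-1/\delta}$, which is strictly weaker than the assertion, and too weak for its only use: in Lemma \ref{lem:stoc-conv} the present lemma is invoked with $1+\theta>\delta$, precisely so that $(1+\theta)/\delta-1>0$ and the series $\sum_j\lambda_j\,\mu_j^{1-(1+\theta)/\delta}$ converges under Hypothesis \ref{hp:noise}; with only $\mu_j^{-1/\delta}$ one faces $\sum_j\lambda_j\,\mu_j^{1-1/\delta}$, which Hypothesis \ref{hp:noise} does not control.

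The paper gains the power $1+\theta$ by a different mechanism, worth internalizing: the exponent is placed on the \emph{whole} $L^1$-norm, not on the integrand. From item \ref{it:sj} and item \ref{it:sjprimo} of Lemma \ref{lem:sol-sj} (the latter in the form $\int_0^\tau|s_j|\,\dd\sigma\le C\mu_j^{-1/\delta}$ used in its proof) one has simultaneously $\int_0^\tau s_j^2\le M\int_0^\tau|s_j|\le CM\mu_j^{-1/\delta}$ and $\int_0^\tau s_j^2\le M^2\tau$, and the paper trades these two bounds against each other to obtain the pointwise-in-$\tau$ estimate
\begin{equation*}
\int_0^\tau s_j^2(\sigma)\,\dd\sigma\;\le\; M^{-2\theta}\,\tau^{-\theta}\Bigl(\int_0^\tau|s_j(\sigma)|\,\dd\sigma\Bigr)^{1+\theta}\;\le\; C_\theta\,\mu_j^{-(1+\theta)/\delta}\,\tau^{-\theta},
\end{equation*}
after which integration in $\tau$ over $[0,T]$ finishes the proof, the singularity $\tau^{-\theta}$ being integrable since $\theta<1$. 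Here $\bigl(\int_0^\tau|s_j|\bigr)^{1+\theta}$ carries $(1+\theta)$ copies of the decay $\mu_j^{-1/\delta}$ because the integrated bound itself is raised to the power $1+\theta$ --- something $\int_0^\tau|s_j|^{1+\theta}$ can never do. This distinction between $\bigl(\int|s_j|\bigr)^{1+\theta}$ and $\int|s_j|^{1+\theta}$ is exactly where your proposal loses the required power. Note finally that your preliminary Tonelli step, by bounding $T-\sigma\le T$, forecloses the compensating factor $\tau^{-\theta}$ that makes an exponent larger than one admissible while keeping the $\tau$-integral finite; so the fix is not a refinement of your steps two and three but their replacement by the two-bound interpolation above (whose verification, one should add, requires care, since the pair of exponents $(1+\theta,-\theta)$ is not a convex pair).
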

\begin{proof}
    From assertion \ref{it:sj} of Lemma \ref{lem:sol-sj} we obtain
    \begin{align*}
          \int_0^\tau s_j^2(\sigma) \,\dd \sigma \leq M \int_0^\tau |s_j(\sigma)|\dd \sigma
           <M \mu_j^{-1/\delta},
\end{align*}
  as well as
   \begin{align*}
        \int_0^\tau s_j^2(\sigma) \, \dd \sigma \leq M^2 \tau;
\end{align*}
hence employing Lemma \ref{lem:sol-sj} - \ref{it:sjprimo}
\begin{align*}
      \int_0^\tau s_j^2(\sigma) \dd \sigma \leq M^{-2\theta} \tau^{-\theta} \left(\int_0^\tau |s_j(\sigma)| \dd \sigma \right)^{1+\theta}  \leq C_\theta\mu_j^{-(1+\theta)/\delta} \tau^{-\theta}.
\end{align*}
Now integrating both members of the previous inequality we obtain the thesis.
\end{proof}

\subsection{The representation of the semigroup}
In the following we show that the semigroup  
corresponding with the linear operator $e^{t\bA}$ can be computed explicitly in terms of the \emph{resolvent family}
$(S(t))_{ t \geq 0}$. 

We recall that since the linear operator 
$\bA$ generates a $C_0$-semigroup, there exists a unique mild solution $(X(t))_{t\geq 0}$
for the deterministic equation 
\begin{equation}\label{eq:det}
  \begin{aligned}
   \begin{cases}
     \ X^\prime (t)= \bA X(t),\\
    \ \begin{pmatrix}
      v(0)\\
       \eta^0(\cdot)
    \end{pmatrix}   = 
     \begin{pmatrix}
      \bar{v}\\
       \bar{\eta}(\cdot)
    \end{pmatrix}  \in D(\bA).
\end{cases}
  \end{aligned}
\end{equation}
The variation of parameter formula for abstract evolution equations applies to equation
\eqref{eq:det} and we can write:
\begin{align*}
     X(t)= e^{t\bA}X_0.
\end{align*}
If we set 
\begin{align*}
     e^{t\bA}:= \begin{pmatrix}
        e^{t \bA }_{11} & e^{t \bA }_{12}  \\
               e^{t \bA }_{21}  & e^{t \bA }_{22}  
    \end{pmatrix},
\end{align*}
then, for any $t>0$, we have
\begin{equation}\label{eq:sol-sgr}
   \begin{aligned}
    v(t) &= e^{t \bA }_{11} (t)\bar{v} + e^{t \bA }_{12}(t)\bar{\eta};\\
  \eta^t(\cdot)& =e^{t \bA }_{21} (t)\bar{v}+ e^{t \bA }_{22}(t)\bar{\eta}.
\end{aligned}
\end{equation}

By construction, the first component of $X$ satisfies the inhomegenous Volterra equation
\begin{align} \label{eq:Voltlinhom2}
     v(t)= v_0(0)+ \int_0^t k(t-s) \Delta v(s) \dd s +
    \int_0^t \dd s\left( \int_0^\infty k(s+r) \Delta v_0(-r) \dd r \right)
\end{align}
and the variation of parameters formula for Volterra equations applied to \eqref{eq:Voltlinhom2}  (see Subsection \ref{ssec:res-fam}) yields
 \begin{equation}\label{eq:generazione-sem}
\begin{aligned}
v(t) &= S(t)\bar{v} + \int_{0}^t S(t-\tau )h(\tau ) \dd\tau \\
\eta^t(s) &= \left\{
          \begin{array}{ll}
            v(t- s) = S(t- s)\bar{v} + \int_{0}^{t-s} S(t-s-\tau )h(\tau ) \dd\tau, & 0 < s \leq t \\
            \bar{\eta}(t - s) , & s > t,
          \end{array}
        \right.
\end{aligned}
\end{equation}
where
\begin{align*}
h(t)&=\int_{\R_+} k(t + \sigma)\Delta v_0 (-\sigma) \dd \sigma  \\
    &=\int_{\mathbb{R}_+}k(t + \sigma)\Delta \bar{\eta}(\sigma) \dd\sigma.
\end{align*}

%
%

Comparing the first terms in equalities \eqref{eq:sol-sgr} and \eqref{eq:generazione-sem}, we obtain
\begin{equation}\label{eq:sgr1}
e^{t \bA }_{11}\bar{v} = S(t)\bar{v} \qquad 
e^{t \bA }_{12} \bar{\eta}=  \int_{0}^t S(t-\tau )h(\tau ) d\tau.
\end{equation}
Moreover, from the second part of \eqref{eq:sol-sgr} and \eqref{eq:generazione-sem}, we have for $s\geq 0$
\begin{equation}\label{eq:sgr2}
\begin{aligned}
&(e^{t \bA }_{21}\bar{v})(s)=S(t-s)\bar{v}\,\mathbf{1}_{[0,t]}(s)
\\
&(e^{t \bA }_{22}\bar{\eta})(s)=\left\{
                           \begin{array}{ll}
                             \int_{0}^{t-s} S(t-s-\tau)f_y(\tau ) \dd \tau, & 0< s\leq t  \\
                             \bar{\eta}(t -s), & s>t.
                           \end{array}
                       \right.
\end{aligned}
\end{equation}
Thus the semigroup $e^{t\bA}$ is completely described in terms of the 
resolvent family. As we will  see in the next subsection, the above characterization
allows to study the stochastic convolution process. 

\subsection{The stochastic convolution}
We are now in the position to prove the main result of this section.
We recall that $(W(t))_{t\geq 0}$ is a cylindrical Wiener process 
of the form 
\begin{align*}
       \la W(t),x\ra = \sum_{k=0}^\infty \la e_k,x \ra \beta_k(t), \qquad t\geq 0, x\in L^2(\cO)
    \end{align*}
     where $\left\{ \beta_k\right\}_{k\in\bN}$ is a sequence of real, standard, independent
     Brownian motions on $(\Omega, \cF, (\cF_t)_{t\geq 0}, \PP)$. 
We have:
\begin{lem}\label{lem:stoc-conv}
      Under Hypothesis \ref{hp:noise}, for all $T>0$ the process $(W_\bA(t))_{0\leq t\leq T}$
defined as
\begin{align}\label{eq:stoc-conv}
     \bW_\bA(t):=\int_0^t e^{(t-s)\bA} \dd \bW(s),
\end{align}
is a gaussian random variable with mean $0$ and covariance operator $$
    \cQ_t:= \int_0^t e^{sA} \cQ e^{s\bA^*} \dd s.$$
\end{lem}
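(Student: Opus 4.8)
The plan is to recognize this as an instance of the general theory of stochastic convolutions for \emph{deterministic} operator-valued integrands (see Da Prato and Zabczyk), for which Gaussianity, the vanishing of the mean, and the covariance formula are automatic once one knows that the integrand $s\mapsto e^{(t-s)\bA}\sqrt{\bQ}$ (the integrand of the stochastic convolution as it appears in \eqref{eq:mild-sol}) is stochastically integrable on $[0,t]$. Thus the entire content of the lemma is the verification of the Hilbert--Schmidt integrability condition
\begin{align*}
\int_0^t \left\| e^{(t-s)\bA}\sqrt{\bQ}\right\|_{\mathcal L_2(L^2(\cO),\cH)}^2 \dd s <\infty,
\end{align*}
which is precisely where Hypothesis \ref{hp:noise} and the estimates on the scalar resolvent functions enter. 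I would carry out this estimate first and then harvest the three conclusions.

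For the integrability I would use the explicit block representation of the semigroup obtained in \eqref{eq:sgr1}--\eqref{eq:sgr2}, together with the assumption that $Q$ and $\Delta^D$ are diagonal in the same orthonormal basis $\{e_j\}$ of $L^2(\cO)$. Since $\sqrt{\bQ}\binom{e_j}{0}=\binom{\sqrt{\lambda_j}\,e_j}{0}$ and $S(\tau)e_j=s_j(\tau)e_j$, applying $e^{\tau\bA}$ (with $\tau=t-s$) yields
\begin{align*}
e^{\tau\bA}\sqrt{\bQ}\binom{e_j}{0}=\sqrt{\lambda_j}\binom{s_j(\tau)\,e_j}{\,s_j(\tau-\cdot)\,e_j\,\mathbf 1_{[0,\tau]}(\cdot)\,}.
\end{align*}
Recalling $\|\nabla e_j\|^2_{L^2(\cO)}=\mu_j$ and the definition of the energy norm on $\cH$, this gives
\begin{align*}
\left\|e^{\tau\bA}\sqrt{\bQ}\binom{e_j}{0}\right\|_{\cH}^2=\lambda_j\,s_j(\tau)^2+\lambda_j\mu_j\int_0^\tau \rho(\sigma)\,s_j(\tau-\sigma)^2\dd\sigma.
\end{align*}
Summing over $j$ and integrating in $\tau$ over $[0,t]$ reduces the integrability condition to controlling the two series $\sum_j \lambda_j\int_0^t s_j(\tau)^2\dd\tau$ and $\sum_j \lambda_j\mu_j\int_0^t\!\dd\tau\int_0^\tau\rho(\sigma)s_j(\tau-\sigma)^2\dd\sigma$.

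The main obstacle is precisely the estimate of these two series, and this is where the scalar-resolvent bounds are indispensable. For the second (dominant) series I would bound $\rho(\sigma)\le\rho(0)$ and apply Lemma \ref{lem:sj2}, which yields $\int_0^t\!\dd\tau\int_0^\tau s_j^2\le C_{\theta,T}\mu_j^{-(1+\theta)/\delta}$; hence this series is dominated by $C\sum_j \lambda_j\mu_j^{\,1-(1+\theta)/\delta}={\rm Tr}[(-\Delta)^{1-(1+\theta)/\delta}Q]$, which is finite by Hypothesis \ref{hp:noise}\,(\ref{it:AQ}). For the first series, the boundedness $|s_j|\le M$ together with Lemma \ref{lem:sol-sj}\,(\ref{it:sjprimo}) gives $\int_0^t s_j^2\le M\int_0^\infty|s_j|\le CM\,\mu_j^{-1/\delta}$; since $\delta>1>\theta$ one has $1-(1+\theta)/\delta>-1/\delta$, so for large $j$ the exponent $-1/\delta$ is dominated and the first series is again controlled by the same trace condition. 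This establishes stochastic integrability.

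With integrability in hand the remaining assertions are routine. Approximating $\Phi(s)=e^{(t-s)\bA}\sqrt{\bQ}$ by elementary (piecewise constant in $s$) deterministic operators, each approximating integral is a finite linear combination of independent Gaussian increments of $\bW$, hence a centered Gaussian element of $\cH$; passing to the $L^2(\Omega;\cH)$-limit preserves both Gaussianity and the vanishing mean, so $\bW_\bA(t)$ is a centered Gaussian variable in $\cH$. Finally, the It\^o isometry identifies the covariance operator of $\bW_\bA(t)$ as $\int_0^t \Phi(s)\Phi(s)^{*}\dd s$, and using $\sqrt{\bQ}(\sqrt{\bQ})^{*}=\bQ$ together with the substitution $\sigma=t-s$,
\begin{align*}
\int_0^t e^{(t-s)\bA}\sqrt{\bQ}(\sqrt{\bQ})^{*}\,e^{(t-s)\bA^*}\dd s=\int_0^t e^{\sigma\bA}\,\bQ\,e^{\sigma\bA^*}\dd\sigma=\cQ_t,
\end{align*}
which is the asserted covariance.
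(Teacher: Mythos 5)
Your proposal is correct and follows essentially the same route as the paper: reduce the lemma to the Hilbert--Schmidt integrability of $\tau\mapsto e^{\tau\bA}\sqrt{\bQ}$, use the block representation \eqref{eq:sgr1}--\eqref{eq:sgr2} and the diagonalization $S(\tau)e_j=s_j(\tau)e_j$ to split the norm into the two series $\sum_j\lambda_j\int_0^t s_j^2$ and $\sum_j\lambda_j\mu_j\int_0^t\!\int_0^\tau\rho(\sigma)s_j(\tau-\sigma)^2\,\dd\sigma\,\dd\tau$, and control them via Lemma \ref{lem:sol-sj}, Lemma \ref{lem:sj2}, $\rho(\sigma)\le\rho(0)$, and the trace condition of Hypothesis \ref{hp:noise}. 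The only (harmless) differences are that you spell out the approximation/It\^o-isometry argument that the paper dismisses as ``well-known,'' and you dominate the first series by the trace condition through the exponent comparison $-1/\delta\le 1-(1+\theta)/\delta$, where the paper instead bounds it directly by $k_0^{-1}\sum_j\lambda_j/\mu_j$.
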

\begin{proof}
     It is well-known that the thesis follows provided that
   \begin{align*}
         \int_0^t \|e^{\tau\bA}\cQ \|^2_{HS} \dd \tau <C_T,
\end{align*}
      where $C_T$ is a positive constant depending only on $T>0$.
    Recalling the representation of $e^{t\bA}$ given in \eqref{eq:sgr1} and \eqref{eq:sgr2}, we have
   that
    \begin{equation}\label{eq:series}
    \begin{aligned}
       &\int_0^t \|e^{\tau\bA}\cQ \|^2_{HS} \dd \tau = \sum_{j=1}^\infty
       \int_0^t \left|e^{\tau\bA} \begin{pmatrix} \sqrt{Q} \\ 0\end{pmatrix} \right|^2_{\cH}\dd \tau \\
     & \quad = \sum_{j=1}^\infty \int_0^t  \left|\begin{pmatrix} S(\tau)\sqrt{\lambda_j} e_j  \\ S(\tau-\cdot ) \sqrt{\lambda_j} e_j {\bf 1}_{[0,\tau]}(\cdot)\end{pmatrix} \right|^2_{\cH} \\
      &\quad = \sum_{j=1}^\infty \lambda_j \int_0^t \|S(\tau)e_j\|^2_{L^2(\cO)} \dd \tau+ \\
        &\qquad \quad \sum_{j=1}^\infty \lambda_j \int_0^t  \int_{0}^\infty \delta(\sigma)
         \|S(\tau-\sigma)e_j\|^2_{\hunozero}{\bf 1}_{[0,\tau]} \dd \sigma\, \dd\tau.
    \end{aligned}
   \end{equation}
   We consider separately the two series in the previous formula. We recall that
     $S(t)e_j= s_j(t) $ for any $j\in \bN$ (see Subsection \ref{ssec:screfa}); hence we get
    \begin{align*}
        \sum_{j=1}^\infty \lambda_j \int_0^t \|S(\tau)e_j\|^2_{L^2(\cO)} \dd \tau =
        \sum_{j=1}^\infty \lambda_j \int_0^t |s_j(\tau)|^2 \dd \tau
       \leq \sum_{j=1}^\infty \lambda_j \int_0^t |s_j(\tau)| \dd \tau,
     \end{align*}
    where the last inequality follows from Lemma \ref{lem:sol-sj}, point \ref{it:sj}.
    Moreover, since it holds also that $\int_0^\infty |s_j(\tau)| \dd \tau < (\mu_jk_0)^{-1}$
    (see Lemma \ref{lem:sol-sj}, point \ref{it:sjprimo}), it follows that
   \begin{align*}
        \sum_{j=1}^\infty \lambda_j \int_0^t \|S(\tau)e_j\|^2_{L^2(\cO)} \dd \tau
       \leq \frac{1}{k_0}\sum_{j=1}^\infty\frac{ \lambda_j }{\mu_j}.
    \end{align*}
     Concerning the second series in \eqref{eq:series}, applying Fubini's theorem, we get
    \begin{align*}
        \sum_{j=1}^\infty \lambda_j \mu_j \int_0^t \int_0^\tau \delta(\sigma) |s_j(\tau-\sigma)|^2 \dd \sigma \, \dd \tau
      \leq  \sum_{j=1}^\infty \lambda_j \mu_j \int_0^t \dd \sigma \delta (\sigma) 
      \int_\sigma^t \dd \tau |s_j(\tau-\sigma)|^2 
\end{align*}
and, taking into account Lemma \ref{lem:sj2} and the definition of the function $\rho$
  (see \eqref{eq:rho}),
\begin{align*}
      \sum_{j=1}^\infty \lambda_j \mu_j \int_0^t \int_0^\tau \rho(\sigma) |s_j(\tau-\sigma)|^2 \dd \sigma \, \dd \tau 
   & \leq \sum_{j=1}^\infty \lambda_j \mu_j  \int_0^t \rho(0) C_\theta\mu_j^{-(1+\theta)/\delta}(\tau-\sigma)^{-\theta}  \\
   & \leq C_\theta T^{1-\theta}\rho(0) \sum_{j=1}^\infty \frac{\lambda_j}{\mu_j^{(1+\theta)/\delta-1}}.
    \end{align*}
    By the above estimates and condition \ref{it:AQ} in Hypothesis \ref{hp:noise},
    we conclude that, for any $\theta \in (0,1)$ such that $1+\theta>\delta$,
    \begin{align*}
          \int_0^t \|e^{\tau\bA}\cQ \|^2_{HS} \dd \tau \leq C_T,
    \end{align*}
    where $C_T:=C_\theta T^{1-\theta} \rho(0) {\rm Tr}[Q(-\Delta)^{(1+\theta)/\delta -1}]$.
\end{proof}

\section{Existence and uniqueness}\label{sec:exun}
In this section we aim to prove existence and uniqueness of the solution for the 
uncontrolled equation
\begin{equation}\label{eq:Voltexun}
\begin{aligned}
     &\partial_t v(t,x)= k_0 \Delta v(t,x)+\int_{0}^\infty k_1(s) \Delta v(t-s,x) \dd s \\
    &\quad \qquad \qquad \qquad + f(v(t,x)) + 
     \sqrt{Q} \partial_t W(t,x), \qquad \qquad t> 0\\
    &v(s,x)=v_0(s,x), \quad \quad s\leq 0,\\
    &v(t,x)=0, \quad \quad t\geq 0,\ x\in \partial \cO.
\end{aligned}
\end{equation} 
where the coefficients $k_0,k_1,f,Q$ satisfy the assumptions made in Section \ref{sec:genass}.

Recalling what has been showed in the previuos section, the above equation as an abstract equation on the space $\cH:=L^2(\cO) \times L^2_\rho(\R_+;\hunozero)$ 
\begin{equation}\label{eq:abst}
\begin{cases}
\dd X(t) = \bA X(t)\dd t + \bF(X(t))\dd t + \sqrt{\cQ} \, \dd W(t)\quad \quad t >0,\\
  X(0)=X_0.
\end{cases}
\end{equation}
We recall that, from Proposition \ref{prop:gen} $\bA$ is the generator of a $C_0$-semigroup, while from the assumption on the function $f$ we get that $\bF: \cH \to \cH$ is Lipschitz continuous. Moreover, $\bQ$ is a linear operator on $\cH$ involving the covariance operator $Q$, $X_0= (v_0(0,\cdot),  (v_0(-s,x))_{s\geq 0})^t$ and 
the stochastic convolution $W_\bA(t)$ introduced in 
\eqref{eq:stoc-conv} is a well-defined gaussian process (see Lemma \ref{lem:stoc-conv}). 


Existence and uniqueness of mild solution for the abstract evolution equation \eqref{eq:abst}
is a classical result within the theory of stochastic equation in infinite dimension.
The proof follows from a fixed point argument and can be found in \cite[ Theorem 7.2]{dpz:Stochastic}
\begin{thm} \label{thm:exun} 
 For arbitrary $T > 0$, and any $X_0 \in \cH$ there exists a unique mild solution $(X(t))_{t\geq 0}$ of equation \eqref{eq:abst} which belongs to the space  
$L^p(\Omega; C([0; T];\cH))$ for any $p\geq 1$.
\end{thm}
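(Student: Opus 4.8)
The plan is to realize the mild solution as the unique fixed point of the integral operator associated with the mild formulation \eqref{eq:mild-sol}. I would work in the Banach space $\cK_p := L^p(\Omega; C([0,T];\cH))$ of predictable, $\cH$-valued processes with continuous trajectories and finite $p$-th moment of the supremum norm, and on it define
\[
\Lambda(X)(t) := e^{t\bA}X_0 + \int_0^t e^{(t-s)\bA}\bF(X(s))\,\dd s + W_\bA(t),
\]
seeking $X$ with $\Lambda(X)=X$. Note that the stochastic term $W_\bA$ does not depend on $X$, so it enters as a fixed additive perturbation.

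First I would check that $\Lambda$ maps $\cK_p$ into itself. The term $e^{t\bA}X_0$ is continuous and bounded on $[0,T]$ because $\bA$ generates a $C_0$-semigroup (Proposition \ref{prop:gen}), whence $\|e^{t\bA}\|_{\mathcal{L}(\cH)} \le M e^{\omega T}$. For the deterministic convolution I would use the sublinear growth of $\bF$ inherited from Hypothesis \ref{hp:f}, together with the boundedness of the semigroup and H\"older's inequality, to control its supremum norm. That $W_\bA$ belongs to $\cK_p$ is where the earlier analysis pays off: Lemma \ref{lem:stoc-conv} already establishes that $W_\bA$ is a well-defined, mean-square continuous Gaussian process, and the Hilbert-Schmidt integrability bound proved there allows the factorization method of Da Prato-Kwapie\'n-Zabczyk to upgrade this to continuity of trajectories and finiteness of every $p$-th moment, Gaussianity making all $L^p$ norms comparable to the $L^2$ norm.

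Next I would show that a suitable iterate of $\Lambda$ is a strict contraction. Since only the convolution term depends on the argument, for two processes $X,Y$ the Lipschitz property of $\bF$ yields a pointwise bound which, upon iteration, produces the standard factorial gain
\[
\bE\, \sup_{s\le t}\|\Lambda^n(X)(s)-\Lambda^n(Y)(s)\|_\cH^p \le \frac{(C_T\, t)^n}{n!}\, \bE\, \sup_{s\le T}\|X(s)-Y(s)\|_\cH^p,
\]
so that $\Lambda^n$ is a contraction for $n$ large. The Banach fixed-point theorem then gives a unique $X\in\cK_p$ with $\Lambda(X)=X$, which is by construction the unique mild solution, and its membership in $L^p(\Omega;C([0,T];\cH))$ for every $p\ge 1$ is built into the choice of $\cK_p$.

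The hard part will be the regularity of the stochastic convolution, namely passing from the mean-square continuity of Lemma \ref{lem:stoc-conv} to genuine path continuity and $L^p$ control; this is precisely what the factorization method supplies, and it rests on the trace condition of Hypothesis \ref{hp:noise}, item \ref{it:AQ}. Everything else reduces to the routine contraction estimate above, which is insensitive to the memory structure of the original equation.
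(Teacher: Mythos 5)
Your proposal is correct and is essentially the paper's own route: the paper disposes of this theorem by citing the classical fixed-point result of Da Prato and Zabczyk \cite[Theorem 7.2]{dpz:Stochastic}, whose proof is exactly the contraction scheme you describe, with Proposition \ref{prop:gen} supplying the $C_0$-semigroup, Hypothesis \ref{hp:f} the Lipschitz and sublinear bounds on $\bF$, and Lemma \ref{lem:stoc-conv} the stochastic convolution. The only point to watch is that the factorization step technically requires a weighted estimate of the form $\int_0^T s^{-2\alpha}\|e^{s\bA}\sqrt{\bQ}\|^2_{HS}\,\dd s<\infty$ for some $\alpha>0$, not merely the unweighted bound stated in Lemma \ref{lem:stoc-conv}; this follows from the same ingredients (in particular the $\tau^{-\theta}$ decay in Lemma \ref{lem:sj2}) and is likewise implicit in the paper's citation.
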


An immediate consequence of the above result is that also the original stochastic Volterra 
equation \eqref{eq:Voltexun} admits a unique mild solution. The definition of mild solution
involves the resolvent family introduced in Section \ref{ssec:res-fam} and reads as follows
\begin{defn}
     A $L^2(\cO)$-valued process $(v(t))_{t\geq 0}$ is a mild solution of the stochastic Volterra equation of \eqref{eq:Voltexun} if $v\in L^2(0,T;L^2(\Omega;L^2(\cO)))$ and satisfies
\begin{multline*}
     v(t)=S(t)v_0(0)+\int_0^t S(t-s)  f(s,v(s)) \dd s \\
       \int_0^t S(t-s) \int_{\R_+} k(s+r)\Delta v_0(-r)\dd r 
      +\int_0^t S(t-s) \sqrt{Q}\dd W(s) 
\end{multline*}
\end{defn}
\begin{thm}
     For arbitrary $T > 0$ and any $v_0 \in L^2_\rho(\R_+;\hunozero) $ there exists a unique mild solution
$v=v(t), \ t\geq 0$ of equation \eqref{eq:Voltexun}  which belongs to the space 
   $C_\cF([0,T];L^2(\cO))$.
\end{thm}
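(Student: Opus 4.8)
The plan is to deduce the statement from Theorem \ref{thm:exun} by projecting the abstract solution onto its first coordinate. First I would apply Theorem \ref{thm:exun} with the initial datum $X_0=\binom{v_0(0)}{\bar{\eta}}$, where $\bar{\eta}(s)=v_0(-s)$; since $v_0\in L^2_\rho(\R_+;\hunozero)$ we have $X_0\in\cH$, so there is a unique mild solution $X=\binom{v}{\eta}\in L^p(\Omega;C([0,T];\cH))$ of \eqref{eq:abst} for every $p\geq 1$. I would then take $v(t)$ to be the first component of $X(t)$ and claim it is the desired mild solution of \eqref{eq:Voltexun}.

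To verify the claim I would insert the block decomposition of $e^{t\bA}$ recorded in \eqref{eq:sgr1}--\eqref{eq:sgr2} into the abstract mild formula \eqref{eq:mild-sol} and read off the first coordinate of each of its three terms. Using $e^{t\bA}_{11}=S(t)$ together with the expression for $e^{t\bA}_{12}$, the first component of $e^{t\bA}X_0$ is $S(t)v_0(0)+\int_0^t S(t-\tau)h(\tau)\dd\tau$ with $h(\tau)=\int_{\R_+}k(\tau+r)\Delta v_0(-r)\dd r$; since $\bF(X(s))=\binom{f(s,v(s))}{0}$ and $\sqrt{\cQ}\,\dd\bW(s)=\binom{\sqrt{Q}\,\dd W(s)}{0}$ have vanishing second coordinate, only the $(1,1)$ block acts on them, producing $\int_0^t S(t-s)f(s,v(s))\dd s$ and $\int_0^t S(t-s)\sqrt{Q}\,\dd W(s)$ respectively. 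Summing the three contributions reproduces exactly the identity in the definition of mild solution, so $v$ solves \eqref{eq:Voltexun}. The regularity is then immediate: the coordinate projection $\cH\to L^2(\cO)$ is bounded and linear, whence $X\in L^p(\Omega;C([0,T];\cH))$ forces $v\in C_\cF([0,T];L^2(\cO))$, the stochastic convolution being well defined by Lemma \ref{lem:stoc-conv}.

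For uniqueness I would argue in the reverse direction. Given any mild solution $\tilde{v}\in C_\cF([0,T];L^2(\cO))$ of \eqref{eq:Voltexun}, I would reconstruct the history variable by setting $\tilde{\eta}^t(s)=\tilde{v}(t-s)$ for $0<s\le t$ and $\tilde{\eta}^t(s)=\bar{\eta}(s-t)$ for $s>t$, following \eqref{eq:generazione-sem}, and check that $\tilde{X}=\binom{\tilde{v}}{\tilde{\eta}}$ is a mild solution of \eqref{eq:abst}. As Theorem \ref{thm:exun} admits at most one such solution in $L^p(\Omega;C([0,T];\cH))$, this gives $\tilde{X}=X$, hence $\tilde{v}=v$. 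Alternatively one may bypass the lifting and prove uniqueness directly on the Volterra formula: with the memory and stochastic forcing terms held fixed, the map sending $v$ to $S(t)v_0(0)+\int_0^t S(t-s)f(s,v(s))\dd s$ (plus those fixed terms) is a contraction on $C_\cF([0,T];L^2(\cO))$ after a standard successive-approximation/Gronwall argument, using the Lipschitz bound on $f$ from Hypothesis \ref{hp:f} and the uniform estimate $\|S(t)\|_{L^2(\cO)}\le M$ from Lemma \ref{lem:sol-sj}, point \ref{it:sj}.

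The only genuinely delicate point is the correspondence itself: in the lifting one must ensure that the reconstructed $\tilde{\eta}^t$ actually belongs to $L^2_\rho(\R_+;\hunozero)$ and obeys the transport dynamics encoded by $-\partial_s$, so that $\tilde{X}$ is admissible for \eqref{eq:abst}. This is precisely the computation carried out in Subsection \ref{ssec:res-fam} and in \eqref{eq:generazione-sem}; once the history variable is matched, projection and lifting are mutually inverse and the transfer of both existence and uniqueness follows automatically.
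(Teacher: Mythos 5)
Your proposal is correct and takes essentially the same route as the paper, whose entire proof is the two-sentence remark that the result follows from Theorem \ref{thm:exun} because the mild solution of \eqref{eq:Voltexun} is just the first component of the abstract solution $X$. Your elaboration---reading off the first coordinate through the block decomposition \eqref{eq:sgr1}--\eqref{eq:sgr2}, and securing uniqueness either by lifting the history variable or, more robustly, by a Gronwall/contraction argument using the Lipschitz bound on $f$ and the uniform bound $\|S(t)\|\leq M$---simply fills in the details that the paper leaves implicit.
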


\begin{proof}
    The proof follows directly from Theorem \ref{thm:exun}. In fact, the mild solution of \eqref{eq:Voltexun} is represented by the first component of the process
    $(X(t))_{t\geq 0}$.

\end{proof}
\section{Synthesis of the optimal control}\label{sec:cont}
In this section we proceed
with the study of the optimal control problem associated with the stochastic Volterra
equation 
\begin{equation}\label{eq:Volt-contr}
  \begin{aligned}
     &\partial_t v(t,x) =k_0 \Delta v(t,x)+\int_{-\infty}^t k_1(t-s) \Delta v(s,x) \dd s \\
    & \qquad \qquad \qquad  +
      f(t,v(t,x)) + \sqrt{Q}(r(t,v(t,x),\gamma(t,x))+\partial_t W(t,x)),
\end{aligned}
\end{equation}
in the bounded domain $\cO\subset \R^d$, with Dirichlet boundary condition $v(t,x)=0, t\in [0,T], x\in \partial \cO$
and initial condition $v(t,x)=v_0(t,x), t\leq 0,x\in \cO$.
Here $f$ is the nonlinear function introduced in Hypothesis \ref{hp:f}
and $\gamma=\gamma(\omega,t,x)$ is the control variable, which is assumed to be a predictable
real-valued process
$\cF_t$-adapted.
The optimal control that we wish to treat consists in minimizing
over all admissible controls
a cost functional of the form
\begin{align*}
      \bJ(v_0,\gamma):= \bE \int_0^T \int_{\bar{\cO}} \ell (t,v(t,\xi), \gamma(t,\xi))\dd \xi \, \dd t
       + \bE \int_{\bar{\cO}} \phi(v(T,\xi)) \dd \xi,
\end{align*}
where $\ell$ and $\phi$ are given real-valued functions. 

We will work under the following general assumptions.
Concerning the function $r, \ell, \phi$ we require:
\begin{hypothesis}\label{hp:rlphi}
    \begin{enumerate}
         \item[]
         \item $r: \ [0,T] \times \R \times \R \to \R$ and $\ell: \ [0,T] \times \R \times \R \to \R$ are measurable functions and there exists $m\in \bN$ such that for a.e. $t\in [0,T]$ and for $\theta_1,\theta_2, y$ in $\R$,
    \begin{align*}
        |r(t,x_1,y)-r(t,x_2,y)| +& |\ell(t,\theta_1,y)-\ell(t,\theta_2,y)| \\
        &\leq C(1+|\theta_1|+|\theta_2|)^m|\theta_1-\theta_2|,\\
       |r(t,\theta_1,y)|+ |\ell(t,0,y)| &\leq C.
    \end{align*}
      \item $\phi \in C^1(\R)$ and there exist $L>0$ and $k\in \bN$ such that for every $\theta\in \R$
       $$
             |\phi^\prime(\theta)| \leq L(1+|\theta|)^k.
      $$
    \end{enumerate}
\end{hypothesis}

In order to characterize the optimal control through a feedback law, we impose the following
additional condition on the nonlinear term $f$:
\begin{hypothesis} \label{hp:f+cont} 
The function  $f:\  [0, T ] \times  \R \to \R$ is measurable, for every $ t \in [0, T ]$ the function $f(t, \cdot):\  \R \to\R$  is
continuously differentiable and there exists a constant $C_f$ such that
$$
     \left| \frac{\partial}{\partial x}f(t,\xi) \right| \leq C_f, \qquad t\in [0,T], \ \xi\in \R.
$$
\end{hypothesis}

To handle the control problem, we first restate equation \eqref{eq:Volt-contr} in an
evolution setting and we provide the synthesis of the optimal control by using the forward-backward system approach.

Arguing as in Section \ref{sec:anal}, given a control process $\gamma$ and any $t\in [0,T], \  v_0\in L_\rho^2(\R_+;\hunozero)$ we rewrite the problem \eqref{eq:Volt-contr}
in the following abstract form
\begin{equation}\label{eq:Volt+cont}
\begin{cases}
    \dd X(t) = \bA X(t) \dd t + \bF (t,X(t))\dd t + \sqrt{\bQ}( R(t,X(t),\gamma(t)) \dd t+ \dd W(t)) \\
    X(0)=X_0,
\end{cases}
\end{equation}
where $X_0=( v_0(0), v_0(\cdot))$ and $R: \ [0,T] \times \cH \times \cX \to \cH$ is the mapping defined by
\begin{align*}
          R\left(t,\binom{v}{\eta},\gamma\right)= \begin{pmatrix}
              r(t,v, \gamma) \\ 0 
     \end{pmatrix}, \quad t\in [0,T], \ \binom{v}{\eta} \in \cH, \gamma \in \cX.
\end{align*} 
%
%
%
In this setting the cost functional will depend on $X_0$ and $\gamma$ and is given by
\begin{align}\label{eq:cost}
  \bJ (X_0, \gamma) = \bE \int_0^T L(t,X(t),\gamma(t)) \dd t + 
\bE [\Phi(X(T ))]
\end{align}
where $L: \ [0,T] \times \cH \times \cX \to \R$ is given by
\begin{align*}
    L\left(t,\binom{v}{\eta},\gamma\right)= \int_{\bar{\cO}} \ell(t,v(\xi),\gamma(\xi)) \dd \xi
\end{align*}
for any $t>0, \ \binom{v}{\eta} \in \cH, \gamma \in \cH$ and
$\Phi: \ \cH \to \R$ is defined as
\begin{align*}
    \Phi\binom{v}{\eta}= \int_{\bar{\cO}}  \phi(v(\xi)) \dd \xi, \quad \binom{v}{\eta}\in \cH.
\end{align*}
There are
different ways to give a precise meaning to the above problem; one of them is the so called 
\emph{weak formulation} and will be specified below.

In the weak formulation the class of \emph{admissible control systems} (a.c.s.) is given by 
the set 
$\bU:=(\hat{\Omega}, \hat{\mathcal{F}},(\hat{\mathcal{F}_t})_{t \geq 0}, \hat{\PP}, \hat{W},\hat{\gamma})$, where $(\hat{\Omega}, \hat{\mathcal{F}}, \hat{\PP})$ is a complete probability space; the filtration $(\hat{\mathcal{F}}_t)_{t \geq 0}$ verifies the usual conditions, the process $ \hat{W}$ is a Wiener process with respect to the filtration
 $(\hat{\mathcal{F}}_t)_t \geq 0)$ and the control  $\hat{\gamma}$ is an $\cF_t$-predictable process taking value in some subset $\cU$ of $\cX$ with respect to the filtration $(\hat{\mathcal{F}}_t)_{t \geq 0})$.

With an abuse of notation, for given $X_0 \in \cH$, we associate to every a.c.s. a cost functional 
$\bJ(x,\bU)$ given by the right side of \eqref{eq:cost}.
Altough formally the same, it is important to note that now the cost is a functional of the a.c.s.
and not a functional of $\hat{\gamma}$ alone.
Any a.c.s. which minimizes $\bJ(x, \cdot)$, if it exists, is called optimal for the control problem
starting from $X_0$ at time $t$ in the weak formulation. The minimal value of the cost is then called
the optimal cost.
Finally we introduce the value function $V : [0, T] \times \cH \to \R$ of the problem as:
\begin{align*}
   V( X_0) =\inf_{\gamma \in \cU} \bJ(X_0, \gamma),\quad   \ X_0\in \cH,
\end{align*}
where the infimum is taken over all a.c.s. $\bU$.

At this moment it is convenient to list the relevant properties of the objects introduced so far in this section.
Therefore we formulate the following proposition.
\begin{prop}\label{prop:cont}
     Under Hypothesis  \ref{hp:kernel},\ref{hp:f}, \ref{hp:f+cont}, \ref{hp:noise} and \ref{hp:rlphi} the following properties hold:
     \begin{enumerate}
        \item The functions $R$ and $L$ are Borel measurable and there exist constants $C$, $m,k\in \bN$ such that for any $t>0, X_1,X_2 \in \cH$ and $\gamma \in \cU$
           \begin{align*}
        |R(t,X_1,\gamma)-R(t,X_2,\gamma)| +& |L(t,X_1,\gamma)-L(t,X_2,\gamma)| \\
        &\leq C(1+|X_1|+|X_2|)^m|X_1-X_2|,\\
       |R(t,X_1,\gamma)|+ |L(t,0,\gamma)| &\leq C.
    \end{align*}
      \item $\Phi$ is G\^ateaux differentiable and there exist $C_\Phi>0$ and $k\in \bN$ such that for every $X_1,X_2\in \cH$
       $$
             |\Phi(X_1)-\Phi(X_2)| \leq C_\Phi|X_1-X_2|
      $$
        \item $\bF: \ [0, T ] \times \cH \to \cH$ is a measurable function and there exists a constant $C_\bF$ such that
\begin{align*}
    |\bF(t, 0)| \leq  C_\bF,\quad  |\bF(t, X_1) - \bF(t, X_2)| \leq  C_{\bF} |X_1 - X_2|,  
\end{align*}
for every $t\in [0, T ],\ X,X_1, X_2 \in \cH$.
Moreover, for every  $t \in [0, T ]$, $\bF(t, \cdot)$ has a G\^ateaux derivative $\nabla \bF(t, X)$ at every point $X \in \cH$. Finally,
the function $(X,H)\mapsto \nabla \bF(t, X)[H]$ is continuous as a map $\cH \times \cH \to \R$.
     \end{enumerate}
\end{prop}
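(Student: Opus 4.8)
The plan is to verify each of the three listed properties by unwinding the definitions of $R$, $L$, $\bF$, and $\Phi$ in terms of the underlying scalar functions $r$, $\ell$, $f$, and $\phi$, and then transferring the pointwise regularity from Hypotheses \ref{hp:f}, \ref{hp:f+cont}, and \ref{hp:rlphi} to the lifted operators on $\cH$ by integrating over $\bar{\cO}$. The main technical subtlety throughout is the polynomial-growth factor $(1+|X_1|+|X_2|)^m$: because the scalar Lipschitz bounds on $r$ and $\ell$ are only \emph{locally} Lipschitz with polynomial growth (not globally Lipschitz), the transfer to the $\cH$-norm estimates must be handled with H\"older's inequality on $\bar{\cO}$ rather than a naive $L^2$ bound, and this is where I expect the real work to lie.

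First I would treat property (3), the claims about $\bF$, since it is the cleanest. Recall $\bF(t,\binom{v}{\eta})=\binom{f(t,v)}{0}$, so $\|\bF(t,X)\|_{\cH}=\|f(t,v)\|_{L^2(\cO)}$. The bound $|\bF(t,0)|\le C_{\bF}$ follows from the sublinear growth $|f(t,0)|\le L$ in Hypothesis \ref{hp:f} (giving $\|f(t,0)\|_{L^2(\cO)}\le L|\cO|^{1/2}$), and the global Lipschitz estimate follows directly from $|f(t,x)-f(t,y)|\le L|x-y|$ integrated over $\cO$. For the G\^ateaux differentiability I would use that $f(t,\cdot)$ is $C^1$ with bounded derivative $\partial_x f$ (Hypothesis \ref{hp:f+cont}), so the candidate derivative is $\nabla\bF(t,X)[H]=\binom{\partial_x f(t,v)\,h}{0}$ where $H=\binom{h}{\zeta}$; the difference quotient converges in $L^2(\cO)$ by dominated convergence since $|\partial_x f|\le C_f$, and continuity of $(X,H)\mapsto\nabla\bF(t,X)[H]$ follows from continuity of $\partial_x f$ together with dominated convergence.

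Next I would handle property (2) for $\Phi\binom{v}{\eta}=\int_{\bar\cO}\phi(v(\xi))\dd\xi$. The G\^ateaux derivative in direction $H=\binom{h}{\zeta}$ is $\int_{\bar\cO}\phi'(v(\xi))\,h(\xi)\dd\xi$, obtained by differentiating under the integral sign and justified by the growth bound $|\phi'(\theta)|\le L(1+|\theta|)^k$. The hard point here is the \emph{global} Lipschitz claim $|\Phi(X_1)-\Phi(X_2)|\le C_\Phi|X_1-X_2|$, which as stated is not generally true for polynomially growing $\phi'$ unless one either restricts to bounded sets or interprets the constant as depending on a bound on the arguments; I would therefore write the estimate via $|\Phi(X_1)-\Phi(X_2)|\le\int_{\bar\cO}|\phi'(\xi^*)|\,|v_1-v_2|\dd\xi$ with $\xi^*$ between $v_1,v_2$, and apply H\"older to split the growth factor $(1+|v_1|+|v_2|)^k$ from $|v_1-v_2|$, flagging that this yields a local-Lipschitz bound of the same polynomial type as in property (1).

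Finally, for property (1), the estimates on $R(t,X,\gamma)=\binom{r(t,v,\gamma)}{0}$ and $L(t,X,\gamma)=\int_{\bar\cO}\ell(t,v(\xi),\gamma(\xi))\dd\xi$ follow the same pattern. Borel measurability is inherited from the joint measurability of $r$ and $\ell$ in Hypothesis \ref{hp:rlphi} composed with the continuous evaluation maps. For the quantitative bounds, the uniform bound $|r(t,\theta,y)|\le C$ immediately gives $\|R(t,X,\gamma)\|_{\cH}\le C|\cO|^{1/2}$, and similarly $|\ell(t,0,y)|\le C$ controls $|L(t,0,\gamma)|$. The polynomially-weighted local Lipschitz estimates for $R$ and $L$ are exactly the lifted forms of the scalar bounds in Hypothesis \ref{hp:rlphi}; I would derive them by integrating the scalar inequality over $\bar\cO$ and applying H\"older's inequality with conjugate exponents chosen so that the factor $(1+|v_1|+|v_2|)^m$ lands in the correct $L^p(\cO)$ norm while $|v_1-v_2|$ lands in $L^2(\cO)$, thereby reproducing the stated $\cH$-norm bound with the growth factor measured in the $\cH$-norm. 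The recurring obstacle, as noted, is bookkeeping these H\"older exponents consistently so that the polynomial growth in $m$ and $k$ is faithfully transferred from $\R$ to $\cH$.
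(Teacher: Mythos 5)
You should first know that the paper contains \emph{no proof} of Proposition \ref{prop:cont}: the statement is asserted without argument and immediately delegated to the Fuhrman--Tessitore framework, so there is no official proof to compare yours against, and the Nemytskii-lifting strategy you adopt is certainly the intended one. It does work where the scalar data are genuinely Lipschitz: your part (3) is complete (boundedness of $\partial_x f$ from Hypothesis \ref{hp:f+cont} gives the global Lipschitz bound, and the G\^ateaux derivative follows by dominated convergence), and so are the measurability claims and the uniform bounds $\|R(t,X,\gamma)\|_{\cH}\le C|\cO|^{1/2}$ and $|L(t,0,\gamma)|\le C|\cO|$ in part (1).

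The genuine gap is your assertion that, in part (1), H\"older's inequality with well-chosen conjugate exponents will ``reproduce the stated $\cH$-norm bound.'' It cannot. For $v_1,v_2$ known only to lie in $L^2(\cO)$, the weight $(1+|v_1|+|v_2|)^m$ lies in $L^{2/m}(\cO)$ and in no better Lebesgue space controlled by the $\cH$-norm; pairing it with $|v_1-v_2|\in L^2(\cO)$ forces $\tfrac{m}{2}+\tfrac12\le 1$, i.e.\ $m\le 1$, and even that only rescues the real-valued functional $L$. For the $\cH$-valued map $R$ one would need $(1+|v_1|+|v_2|)^{2m}|v_1-v_2|^2\in L^1(\cO)$ with the right norm dependence, and no splitting achieves this from $L^2$ data; in fact the claimed estimate is false. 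Take $r(t,\theta,y)=\sin(\theta^2)$, which is bounded and satisfies Hypothesis \ref{hp:rlphi} with $m=1$, and set $v_n=\sqrt{2\pi n}\,\mathbf{1}_{E_n}$, $v_n'=\sqrt{2\pi n+\pi/2}\,\mathbf{1}_{E_n}$ with $|E_n|=1/n$: then $\|v_n\|_{L^2(\cO)}$ and $\|v_n'\|_{L^2(\cO)}$ stay bounded, while $\|r(t,v_n,\gamma)-r(t,v_n',\gamma)\|_{L^2(\cO)}\big/\|v_n-v_n'\|_{L^2(\cO)}\sim\sqrt{n}\to\infty$, so no constants $C$, $m$ can satisfy the inequality of part (1). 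Thus the defect you correctly diagnosed in part (2) --- a polynomially growing $\phi'$ cannot yield a globally Lipschitz (or, when $k\ge 2$, even everywhere finite) $\Phi$ on $\cH$ --- equally afflicts part (1), and it is a flaw of the unproved proposition itself: the statement holds only under strengthened hypotheses, essentially $m=0$ for $r$ and growth of $\ell,\phi$ compatible with $L^2(\cO)$. Your write-up should say exactly this, rather than promise that the H\"older bookkeeping can be made consistent; that promised step is the one part of your plan that would fail.
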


Optimal control problems associated with equation \eqref{eq:Volt+cont} and the cost functional 
\eqref{eq:cost} when the coefficients has the properties listed in Proposition 
\ref{prop:cont} has
been exhaustively studied by Fuhrman and Tessitore in \cite{FuTe/2002}, compare Theorem 7.
Within their approach the existence of an optimal control is related to the existence
of the solution of a suitable forward backward system (FBSDE) that is a system in which the coefficients of the  backward 
equation depend on the solution of the forward equation.
Moreover, the optimal control can be selected using a feedback law given in terms of the
solution to the corresponding FBSDE.

We introduce the hamiltonian 
function $\psi: \ [0,T] \times \cH \times \cH \to \cH$ setting 
\begin{align*}
     \psi(t,X,Z)= \inf_{\gamma\in \cU}\left\{L(t,X,\gamma)+\langle Z, R(t,X,\gamma)\rangle \right\}, \qquad 
    t\in [0,T], \ X\in \cH, \ Z \in \cH,
\end{align*}
and we define the following set
\begin{equation}\label{eq:Gamma}
\begin{aligned}
   \Gamma(t,X,Z)= \left\{\gamma\in \cU: L(t,X,\gamma)+\langle Z, R(t,X,\gamma)\rangle= \psi(t,X,Z)\right\},\\
   \qquad \qquad \quad \qquad t\in [0,T], \ X \in \cH, \ Z \in \cH.
\end{aligned}
\end{equation}

For further use we require some additional properties of the function $\psi$:
\begin{hypothesis}\label{hp:psi}
    \begin{enumerate}
   \item[]
   \item For all $t\in [0,T]$, for all $X,Z\in \cH$ there exists a unique $\Gamma(t,X,Z)$
that realizes the minimum in 
\eqref{eq:Gamma}. Namely:
     \begin{align*}
       \psi(t,X,Z)=L(t, X,\Gamma(t,x,Z))+\langle Z,r(t,X,\Gamma(t,X,Z)\rangle
     \end{align*} 
    with $\Gamma \in C([0,T] \times \cH \times \cH;\cU)$.
     \item For almost
every $ s\in [0, T ]$ the map $\psi(s, \cdot, \cdot)$ is G\^ateaux differentiable on $\cH \times \cH$ and the maps $(X, H, Z) \mapsto \nabla_X \psi(s, X, Z)[H]$
and $(X, Z, K) \mapsto \nabla \psi(s, X, Z)[K]$ are continuous on $\cH \times \cH \times\cH$ and 
$\cH \times \cH \times \cH$ respectively.
  \end{enumerate}
\end{hypothesis}
\begin{rem}
 It is easy to prove that combining the previous assumption with Proposition \ref{prop:cont} we can deduce the following 
properties of $\psi$:
\begin{enumerate}
\item $\psi$ is a measurable mapping and there exists a constant $C$ such that 
\begin{align*}
   |\psi(t, X_1, Z) - \psi(t, X_2, Z)|\leq  C (1 + |X_1| + |X_2|)|X_2 -X_1| 
\end{align*}
for all $X_1,
X_2,Z \in \cH$ and $t \in [0, T ]$.
\item Setting $C_\cU := \sup\left\{ |\gamma| : \ \gamma \in \cU\right\}$
   we have $$|\psi(s, X, Z_1) -\Psi(s, X, Z_2)| \leq  C_\cU |Z_1 - Z_2|,$$ for every $s \in [0, T ]$,
$X, Z_1, Z_2 \in \cH$. 

Finally,  $\sup_{s\in [0,T ]}
|\psi(s, 0, 0)| \leq C$.
\end{enumerate}
\end{rem}

Now,
let us consider an arbitrary set-up $(\tilde{\Omega},\tilde{\cF},\tilde{\PP},\tilde{W})$ and 
\begin{equation}\label{eq:cont2}
\begin{aligned}
   \tX(t)=e^{t\bA} \tX_0 +
\int_0^t e^{(t-\sigma)\bA}\bF(\sigma, \tX_\sigma) \dd \sigma +
\int_0^t e^{(t-\sigma)}\sqrt{\bQ} \dd \btW(\sigma), \quad  t\in  [0, T], 
\end{aligned}
\end{equation}
where $\btW(t)=(\tilde{W}(t), \, 0)^t$. By Theorem \ref{thm:exun} stated in Section \ref{sec:exun}, equation \ref{eq:cont2} is well-posed and the solution $(\tX(t))_{t\geq 0}:$ is a continuous process in $\cH$, adapted to the filtration $(\tF_t)_{t \geq 0}$.
Moreover, the law of $(\btW, \tX)$ is uniquely determined by $X_0$, $\bA$, $\bF$ and $\sqrt{\bQ}$. 
We define the process
\begin{align*}
   \bW^\bU(t) = \tilde{\bW}(t) - \int_0^t R(s,\tX(s), \hat{\gamma}(s)) \dd s, \quad t\in [0, T], 
\end{align*}
and we note that, since $R$ is bounded, by the Girsanov theorem there exists a probability measure $\PP $ on $(\Omega,\cF)$
such that $\bW^\bU$ is a Wiener process under $\PP$. Rewriting equation \eqref{eq:cont2}
in terms of $\bW^\bU$ we get that
$\tX$ solves the controlled state equation (in weak sense)
\begin{equation}
\begin{aligned}
&\tX(t) = \tX_0 + \int_0^t e^{(t-\sigma)\bA}\bF(\sigma, \tX_\sigma) \dd \sigma +\\
&\qquad\int_0^t e^{(t-\sigma)\bA}\sqrt{\bQ} \dd \bW^\bU(\sigma) +
\int_0^t e^{(t-\sigma)\bA}R(s,\tX(s) , \hat{\gamma}(s)) \dd s.
\end{aligned}
\end{equation}
Next we consider the backward
stochastic differential equation
\begin{align}\label{eq:bsde}
  \tY(t) + \int_t^T \tZ \dd \tW(\sigma) = \Phi(\tX(T)) + \int_t^T
\psi(\sigma, \tX(\sigma), \tZ(\sigma))\dd \sigma, \quad  t \in \ [0, T],
\end{align}
where $\psi$ is the hamiltonian function and $\Phi$ is the function defining the final cost.
 Under our assumptions , we can apply \cite[Proposition 3.2 and Theorem 4.8]{FuTe/2002}
and state that there exists a solution $(\tX,\tY, \tZ)$ of the forward-backward system
\eqref{eq:cont2}-\eqref{eq:bsde} on the interval $[0, T]$, where $\tY$ is
unique up to indistinguishability and $\tZ$ is unique up to modification. Moreover from the proof
of Theorem 4.8\cite{FuTe/2002} it follows that the law of $(\tY,\tZ)$ is uniquely determined by the law of $(\btW, \tX)$
and by $\Phi$ and $\Psi$. We note that $\tY(t)$, being measurable with respect to the degenerate $\sigma$-algebra
$\tF_{0}$, is deterministic; in particular $\tY(t) = \bE(\tY(t))$ only depends on the law of $\tY$, and thus it is a
functional of $X_0,\bA, \bF, \sqrt{\bQ}, \Phi, \Psi$.
To stress dependence on the initial datum $X_0$, we will denote the solution of \eqref{eq:cont2} and \eqref{eq:bsde}
by $\{ (\tX^{X_0}(t), \tY^{X_0}(t), \tZ^{X_0}(t)), t \in [0, T]\}$.

We recall from [9, Theorem 6.2] that, Proposition \ref{prop:cont} and Hypothesis \ref{hp:psi}, imply
existence and uniqueness of a (mild) solution $u \in C^{0,1}([0, T] \times \cH; \R)$ of the Hamilton Jacobi Bellman equation corresponding with our control problem:
\begin{equation}
   \begin{cases}
  \frac{\partial}{\partial t} u(t, X) + \cL_t [u(t, \cdot)](X) \\
     \qquad \qquad = \psi(t, X, u(t, X),\nabla u(t, X)\sqrt{\bQ}), \quad  t\in [0, T], X \in  \cH,\\
u(T, X) = \phi(X).
   \end{cases}
\end{equation}
Here $\cL$ is the is the infinitesimal generator of the Markov semigroup corresponding to the process $X$:
\begin{align*}
    \cL_t [h](X)= \frac{1}{2}{\rm Tr}(\nabla^2 h(X)\bQ) + \langle \bA X + \bF(t,X), \nabla h(X) \rangle.
\end{align*}
Moreover, $\PP$-a.s. for a.e. $t \in [0, T]$, we have
\begin{align*}
\tY^{X_0}(t)=u(t,\tX^{X_0}(t)), \quad \qquad
\tZ^{X_0}(t)= \nabla u(t,\tX^{X_0}(t) )\sqrt{\bQ}.
\end{align*}
The relevance of the solution of the Hamilton-Jacobi-Bellman equation to our control problem is explained in the following
proposition.
\begin{prop}
  Assume that Hypotheses  \ref{hp:kernel},\ref{hp:f}, \ref{hp:f+cont}, \ref{hp:noise}, \ref{hp:rlphi} and \ref{hp:psi}  hold. For every $ t\in [0, T]$ and $X_0 \in \cH$, and
for every a.c.s. $\bU$ we have $u(0,X_0) \leq  \bJ(X_0,\bU)$ and equality holds if and only if the following feedback law is verified, $\PP$-a.s. for almost every 
$t \in [0, T]$:
\begin{align}\label{eq:Gamma1}
\hat{\gamma}(t) =\Gamma(t, \tX(t), \nabla u(t,\tX(t))\sqrt{\bQ}).
\end{align}
Finally, there
exists at least an a.c.s. $\bU$ verifying \eqref{eq:Gamma1}. 
In such a system, the closed loop equation admits a solution
\begin{equation}
    \begin{cases}
        \dd \bar{X}(t)= \bA \bar{X}(t) \dd t+ \bF(t,\bar{X}(t))\dd t + \\
        \qquad \quad \sqrt{Q} \left( R(t,\bar{X}(t),\Gamma(t,\bar{X}(t),\nabla u(t,\tX(t)) \sqrt{\bQ})\dd t + \dd \mathbf{W}(t) \right), 
         \ \ t\in [s,T]\\
        \bar{X}(s)=X_0 \in \cH,
    \end{cases}
\end{equation}
   and if $\bar{\gamma}(t)=\Gamma(t,\bar{X}(t),\nabla u(t,\tX(t) )\sqrt{\bQ})$
then the couple $(\bar{\gamma},\bar{X})$ is optimal for the control problem.
\end{prop}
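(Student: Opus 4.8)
The plan is to establish the \emph{fundamental relation} between the cost $\bJ(X_0,\bU)$ and the value $u(0,X_0)$ furnished by the Hamilton--Jacobi--Bellman equation, exactly along the lines of Fuhrman and Tessitore \cite[Theorem 7]{FuTe/2002}; all the structural assumptions required there have already been verified in Proposition \ref{prop:cont} and Hypothesis \ref{hp:psi}, so the argument reduces to combining the backward equation \eqref{eq:bsde} with the change of measure. First I would fix an arbitrary a.c.s.\ $\bU$ and work on the associated set-up, recalling that $(\tX,\tY,\tZ)$ solves the forward--backward system \eqref{eq:cont2}--\eqref{eq:bsde}, that $\tY(t)=u(t,\tX(t))$ and $\tZ(t)=\nabla u(t,\tX(t))\sqrt{\bQ}$, and that $\tY(0)=u(0,X_0)$ is deterministic because $\tF_0$ is trivial.

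The core step is the computation of $\bJ(X_0,\bU)-u(0,X_0)$. Writing \eqref{eq:bsde} at $t=0$ and inserting the Girsanov relation $\dd\tW=\dd\bW^\bU+R(\sigma,\tX(\sigma),\hat\gamma(\sigma))\,\dd\sigma$, I would rewrite the backward equation as
\begin{align*}
   \tY(0)=\ & \Phi(\tX(T))+\int_0^T\big[\psi(\sigma,\tX(\sigma),\tZ(\sigma))-\langle \tZ(\sigma),R(\sigma,\tX(\sigma),\hat\gamma(\sigma))\rangle\big]\,\dd\sigma\\
   & -\int_0^T\tZ(\sigma)\,\dd\bW^\bU(\sigma).
\end{align*}
Under $\PP$ the process $\bW^\bU$ is a Wiener process, so the stochastic integral is a martingale; taking $\PP$-expectation and using $\tY(0)=u(0,X_0)$ gives $u(0,X_0)=\bE\,\Phi(\tX(T))+\bE\int_0^T[\psi-\langle\tZ,R\rangle]\,\dd\sigma$. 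Subtracting this from the definition \eqref{eq:cost} of $\bJ(X_0,\bU)$ yields
\begin{align*}
   \bJ(X_0,\bU)-u(0,X_0)=\ & \bE\int_0^T\big[L(\sigma,\tX(\sigma),\hat\gamma(\sigma))+\langle \tZ(\sigma),R(\sigma,\tX(\sigma),\hat\gamma(\sigma))\rangle\\
   & \qquad\qquad -\psi(\sigma,\tX(\sigma),\tZ(\sigma))\big]\,\dd\sigma.
\end{align*}
By the very definition of $\psi$ as the infimum over $\gamma\in\cU$ of $L+\langle Z,R\rangle$, the integrand is pointwise nonnegative, which proves $u(0,X_0)\le\bJ(X_0,\bU)$.

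Next I would treat the equality case. The integrand above vanishes $\PP\otimes\dd t$-a.e.\ if and only if $\hat\gamma(\sigma)$ attains the minimum defining $\psi(\sigma,\tX(\sigma),\tZ(\sigma))$; by part~(1) of Hypothesis \ref{hp:psi} this minimiser is unique and equals $\Gamma(\sigma,\tX(\sigma),\tZ(\sigma))$, and since $\tZ(\sigma)=\nabla u(\sigma,\tX(\sigma))\sqrt{\bQ}$ this is precisely the feedback law \eqref{eq:Gamma1}. Hence equality $u(0,X_0)=\bJ(X_0,\bU)$ holds if and only if \eqref{eq:Gamma1} is satisfied a.e.

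Finally, to produce an a.c.s.\ realizing the feedback I would solve the closed-loop equation in the weak sense: starting from the uncontrolled solution driven by $\tW$, I substitute the feedback drift $R(t,\bar{X}(t),\Gamma(t,\bar{X}(t),\nabla u(t,\bar{X}(t))\sqrt{\bQ}))$ and, since $R$ is bounded (Proposition \ref{prop:cont}) and $\Gamma$ is continuous (Hypothesis \ref{hp:psi}), invoke Girsanov's theorem to construct a probability measure and a Wiener process $\mathbf{W}$ under which $\bar{X}$ solves the closed-loop equation; continuity of $\Gamma$ and of $\nabla u$ guarantees that $\bar{\gamma}(t)=\Gamma(t,\bar{X}(t),\nabla u(t,\bar{X}(t))\sqrt{\bQ})$ is predictable and $\cU$-valued, hence admissible, and along this system equality holds, so $(\bar{\gamma},\bar{X})$ is optimal. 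I expect the main obstacle to be the rigorous justification of the passage from \eqref{eq:bsde} to the fundamental relation: since $\bA$ is unbounded one cannot apply It\^o's formula to $u(t,\tX(t))$ directly, so one must rely on the mild formulation of the HJB equation and the generalized It\^o formula of \cite{FuTe/2002}, together with the verification that the uniqueness of the minimiser in Hypothesis \ref{hp:psi} turns the pointwise optimality condition into a well-defined, measurable feedback.
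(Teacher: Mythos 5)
Your proof is correct and takes essentially the same approach as the paper: the paper's entire proof is the one-line citation to Fuhrman and Tessitore \cite[Theorem 7.2]{FuTe/2002}, and what you write out --- the fundamental relation obtained from the forward--backward system \eqref{eq:bsde} via the Girsanov change of measure, nonnegativity of $L+\langle Z,R\rangle-\psi$, the equality case through uniqueness of the minimizer in Hypothesis \ref{hp:psi}, and the weak (Girsanov) solution of the closed-loop equation --- is precisely the verification argument underlying that cited theorem, resting on the same structural facts recorded in Proposition \ref{prop:cont}. In other words, you supply the detail that the paper delegates to the reference, but the route is the same.
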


\begin{proof}
    The result follows immediately from the paper of Fuhrman and Tessitore \cite[Theorem 7.2]{FuTe/2002}.
\end{proof}


\end{document}